\documentclass[12pt,a4paper]{amsart}

\usepackage{amssymb}
\usepackage{mathabx}
\usepackage{bbm}
\usepackage{amsthm}
\usepackage{dsfont}
\usepackage{amsmath}
\usepackage{color,graphics,srcltx}
\usepackage{bm}
\usepackage{easybmat}
\usepackage{multirow,bigdelim}
\usepackage{enumerate}
\usepackage{graphicx}

\newtheorem{proposition}{Proposition}

\newtheorem{lemma}[proposition]{Lemma}
\newtheorem{corollary}[proposition]{Corollary}
\newtheorem{theorem}[proposition]{Theorem}

\setlength{\textwidth}{170mm}
\setlength{\textheight}{210mm}
\setlength{\oddsidemargin}{-9mm}
\setlength{\evensidemargin}{-9mm}
\setlength{\topmargin}{-3mm}

\newcommand{\RR}{\mathbb{R}}

\def\cQ{\mathcal{Q}}

\definecolor{damjana}{rgb}{.8,.2,.2}



\def\RR{\mathds R}

\def\II{\mathds I}

\def\BB{\mathcal B}
\def\CC{\mathcal C}

\def\FF{\mathcal F}

\begin{document}

\title[Blomqvist's beta vs.\ measures of concordance]{Relation between Blomqvist's beta and other measures of concordance of copulas}

\author[D. Kokol B.]{Damjana Kokol Bukov\v{s}ek}
\address{School of Economics and Business, University of Ljubljana, and Institute of Mathematics, Physics and Mechanics, Ljubljana, Slovenia}
\email{damjana.kokol.bukovsek@ef.uni-lj.si}

\author[T. Ko\v sir]{Toma\v{z} Ko\v{s}ir}
\address{Faculty of Mathematics and Physics, University of Ljubljana, and Institute of Mathematics, Physics and Mechanics, Ljubljana, Slovenia}
\email{tomaz.kosir@fmf.uni-lj.si}

\author[B. Moj\v skerc]{Bla\v{z} Moj\v{s}kerc}
\address{School of Economics and Business, University of Ljubljana, and Institute of Mathematics, Physics and Mechanics, Ljubljana, Slovenia}
\email{blaz.mojskerc@ef.uni-lj.si}

\author[M. Omladi\v c]{Matja\v{z} Omladi\v{c}}
\address{Institute of Mathematics, Physics and Mechanics, Ljubljana, Slovenia}
\email{matjaz@omladic.net}

\begin{abstract}
An investigation is presented of how a comprehensive choice of four most important measures of concordance (namely Spearman's rho, Kendall's tau, Spearman's footrule, and Gini's gamma) relate to the fifth one, i.e., the Blomqvist's beta. In order to work out these results we present a novel method of estimating the values of the four measures of concordance on a family of copulas with fixed value of beta. These results are primarily aimed at the community of practitioners trying to find the right copula to be employed on their data. However, the proposed method as such may be of independent interest from theoretical point of view.
\end{abstract}

\thanks{All four authors acknowledge financial support from the Slovenian Research Agency (research core funding No. P1-0222).\\ \date}
\keywords{Copula; dependence concepts; imprecise copula; supremum and minimum of a set of copulas; asymmetry or non-exchangeability; measures of concordance}
\subjclass[2010]{Primary:     60E05; Secondary: 60E15, 62N05}

\maketitle

\section{ Introduction }

Copulas are mathematical objects that capture the dependence structure among random variables. Since they were introduced by A.\ Sklar in 1959 \cite{Skla} they have gained a lot of popularity and applications in several fields, e.g., in finance, insurance and reliability theory. Through them we model the dependence between random variables by building (bivariate) distributions with given marginal distributions. When deciding about which copulas to apply in real life scenarios the practitioners need to compare how certain statistical concepts behave on their data and on a class of copulas they intend to exploit.

An important family of such concepts form measures of concordance (cf.\ \cite{DuSe,Nels}) such as Kendall's tau and Spearman's rho or, slightly more generally, measures of association. On the other hand there is the notion of symmetry, also called exchangeability, or the lack of it, which also plays a crucial role in deciding about the choice of dependence rules practitioners want to utilize on their data. These notions have been studied extensively and increasingly. Let us refer to some studies on measures of association \cite{KoBuKoMoOm2, Krus,NeQuMoRoLaUbFl,NeUbFl} and on measures of nonexchangeability \cite{BBMNU14,DuKlSeUbFl,GeNe, KlMe,KoBuKoMoOm2,N}. Perhaps the most significant direction in development of recent studies is computing the local bounds of Fr\'{e}chet-Hoeffding type for families of copulas behaving equally or similarly with respect to a given measure. We need to point out the paper \cite{KoBuKoMoOm2} in this direction, where the authors study a family of copulas with given asymmetry at a given point (non-diagonal, of course) and then compute various measures of association for this family of copulas. This way they put each measure of association considered in a relation with asymmetry. Their point is that the studied family is narrowed down (although not quite determined) by its local bounds.

The main contribution of this paper is to give relations between a fixed measure of association, i.e., Blomqvist's beta, and all other important measures, i.e., Spearman's rho, Kendall's tau, Spearman's footrule, and Gini's coefficient gamma. Another relevant contribution is the development of a novel method by expanding the method of \cite{KoBuKoMoOm2} in order to find these relations. (Observe that, in particular, our method differs substantially from the methods developed in \cite{FrNe,GeNe2,ScPaTr} to study the relation between Kendall's tau and  Spearman's rho.) A third contribution that may be less important in view of applications but perhaps even more important from theoretical point of view is related to our approach as such and will be presented in more details in Section \ref{sec:max asym}.

The paper is organized as follows: Preliminaries on measures of concordance are presented in Section \ref{sec:prelim}, while our method is explained in Section \ref{sec:max asym}. The main results are presented in Sections \ref{sec:rho} (relations to Spearman's rho, Section \ref{sec:tau} (relations to Kendall's tau), Section \ref{sec:footrule} (relations to Spearman's footrule), and Section \ref{sec:gamma} (relations to Gini's gamma). At the end of the paper we give a conclusion presenting some ideas for further investigations.


\section{Preliminaries on measures of concordance}\label{sec:prelim}

A pair of random variables is \emph{concordant} if larger values of the first variable are associated with larger values of the second one, while smaller values of the first one are associated with smaller values of the second. The opposite notion is the notion of discordance. A pair of random variables is \emph{discordant} if larger values of the first variable are associated with smaller values of the second one, while smaller values of the first one are associated with larger values of the second. With this in mind, we denote by $\cQ$ (see \cite[\S 5.1]{Nels} or \cite[\S 2.4]{DuSe}) the difference of two probabilities $\cQ=P((X_1-X_2)(Y_1-Y_2)>0)-P((X_1-X_2)(Y_1-Y_2)<0)$ for a pair of random vectors $(X_1,X_2)$ and $(Y_1,Y_2)$. If the corresponding copulas are $C_1$ and $C_2$, then we have
\begin{equation}\label{concordance}
  \cQ=\cQ(C_1,C_2)= 4 \int_{\II^2} C_2(u,v)dC_1(u,v) -1.
\end{equation}
See \cite[Theorem 5.1.1]{Nels}. Function $\cQ$ is called the \emph{concordance function}. It was introduced by Kruskal \cite{Krus}. This function has a number of useful properties \cite[Corollary 5.1.2]{Nels}:
\begin{enumerate}
  \item It is symmetric in the two arguments.
  \item It is nondecreasing in each argument.
  \item It remains unchanged when both copulas are replaced by their survival copulas.
\end{enumerate}

We denote by $\CC$ the set of all bivariate copulas and by $\II$ the interval $[0,1]\subseteq\RR$. Recall that $C\leqslant D$ for $C,D\in\CC$ means that $C(u,v)\leqslant D(u,v)$ for all $(u,v)\in\II^2$. This introduces an order on $\CC$ which is called the \emph{pointwise order} in \cite[Definition 1.7.1]{DuSe}. Observe that the same order on copulas is denoted by $C\prec D$ and called \emph{concordance ordering} in \cite[Definition 2.8.1]{Nels}.
A mapping $\kappa:\CC\to [-1,1]$ is called a \emph{measure of concordance} if it satisfies the following properties (see \cite[Definition 2.4.7]{DuSe}):
\begin{enumerate}
\item $\kappa(C)=\kappa(C^t)$ for every $C\in\CC$.
\item $\kappa(C)\leqslant\kappa(D)$ when $C\leqslant D$.  \label{monotone}
\item $\kappa(\Pi)=0$, where $\Pi$ is the independence copula $\Pi(u,v)=uv$.
\item $\kappa(C^{\sigma_1})=\kappa(C^{\sigma_2})=-\kappa(C)$.
\item If a sequence of copulas $C_n$, $n\in\mathbb{N}$, converges uniformly to $C\in\CC$, then $\lim_{n\to\infty}\kappa(C_n)=\kappa(C)$.
\end{enumerate}
Here we have denoted by $C^t$ the transpose of $C$, i.e.. $C^t(u,v)= C(v,u)$, and by $C^{\sigma_1}$, respectively $C^{\sigma_2}$, the reflected copula of $C$, i.e., the copula obtained from it after sending $u\to 1-u$, respectively $v\to 1-v$.
We will refer to property (\ref{monotone}) above simply by saying that a measure of concordance under consideration is \emph{monotone}.\\

The five most commonly used measures of concordance of a copula $C$ are Kendall's tau, Spearman's rho, Spearman's footrule, Gini's gamma and Blomqvist's beta.

The first four of them may be defined in terms of the concordance function $\cQ$. Here we use the usual notation for the three standard copulas, i.e., the Fr\'{e}chet Hoeffding upper bound, respectively lower bound, $M(u,v)=\min\{u,v\}$, respectively $W=\max\{0,u+v-1\}$, and the product copula $\Pi(u,v)=uv$.
The \emph{Kendall's tau} of $C$ is defined by
\begin{equation}\label{tau}
\tau(C)=\cQ(C,C),
\end{equation}
\emph{Spearman's rho} by
\begin{equation}\label{rho}
\rho(C)=3\cQ(C,\Pi),
\end{equation}
\emph{Gini's gamma} by
\begin{equation}\label{gamma}
\gamma(C)=\cQ(C,M)+\cQ(C,W),
\end{equation}
\emph{Spearman's footrule} by
\begin{equation}\label{phi}
\phi(C)=
\frac12\left(3\cQ(C,M)-1\right).
\end{equation}
On the other hand, \emph{Blomqvist's beta} is defined by
\begin{equation}\label{beta}
\beta(C)=
4C\left(\frac12,\frac12\right)-1.
\end{equation}
See \cite[{\S}2.4]{DuSe} and \cite[Ch. 5]{Nels} for further details.

\section{  An important imprecise copula }\label{sec:max asym}

In this Section we will present our method in details.
Besides the Fr\'{e}chet-Hoeffding upper and lower bound, which are global bounds for the ordered set of copulas one often studies local bounds of certain subsets. Perhaps among the first known examples of the kind is given in Theorem 3.2.3 of Nelsen's book \cite{Nels} (cf.\ also \cite[Theorem 1]{NeQuMoRoLaUbFl}, where the bounds of the set of copulas $C\in\CC$ with $C(a,b)=\theta$ for fixed $a,b\in\II$ and $\theta\in[W(a,b),M(a,b)]$ are given). In general, if $\mathcal{C}_0$ is a set of copulas, we let
\begin{equation}\label{eq:inf:sup}
  \underline{C}=\inf\mathcal{C}_0\quad\quad\overline{C} =\sup\mathcal{C}_0.
\end{equation}
In \cite{NeQuMoRoLaUbFl} the authors study the bounds for the set of copulas whose Kendall's tau equals a given number $t\in [-1,1]$ and for the set of copulas whose Spearman's rho equals a given number $t\in [-1,1]$. In both cases the bounds are copulas that do not belong to the set. Similar bounds for the set of copulas having a fixed value of Blomqvist's beta were found in \cite{NeUbFl}. In \cite{BBMNU14} the authors present the local bounds for the set of copulas having a fixed value of the degree of non-exchangeability.
The authors of \cite{KoBuKoMoOm2} expand this idea further and develop a method that serves as a raw model to our approach as well, so let us explain its specifics. (We will follow the terminology of \cite{KoBuKoMoOm2} by calling a family of copulas such as $\mathcal{C}_0$ above determined by the local bounds \eqref{eq:inf:sup} and some additional conditions an \emph{imprecise copula}.)

The notion \emph{maximal asymmetry function} was introduced in \cite[\S 2]{KoBuKoMoOm1} following the ideas of \cite{KlMe}, its value at a fixed point $(u, v)\in\II^2$ was computed as
\[
    d^*_\FF (u,v) = \sup_{C\in\FF} \{|C(u,v)-C(v,u)|\},
\]
where $\FF\subseteq\CC$ is an arbitrary family of copulas. If $\FF=\CC$, this supremum is attained since $\CC$ is a compact set by \cite[Theorem 1.7.7]{DuSe}. Klement and Mesiar \cite{KlMe} and Nelsen \cite{N} showed that
\begin{equation}\label{eq:kle mes}
  d_\CC^*(u,v)=\min\{u,v,1-u,1-v,|v-u|\}.
\end{equation}
In \cite{KoBuKoMoOm2} an imprecise copula was introduced as follows: choose $(a,b)\in\II^2$ and a $c\in\II$ such that $0\leqslant c\leqslant d_\CC^*(a,b)$. Define $\CC_0$ to be the set of all $C$ such that
\begin{equation}\label{eq:asym_point}
  C(a,b)-C(b,a) = c.
\end{equation}
Note that this set is nonempty since the set $\CC$ is convex by
\cite[Theorem 1.4.5]{DuSe}. The local bounds $\underline{C}$ and $\overline{C}$ of this set were computed in \cite[Theorem 1]{KoBuKoMoOm2}

\begin{equation}\label{eq:asym cop left}
    \underline{C}^{a,b,c}(u,v)= \max\{W(u,v),\min\{d_1,u-a+d_1,v-b+d_1,u+v-a-b+d_1\}\},
\end{equation}
and
\begin{equation}\label{eq:asym cop right}
    \overline{C}^{b,a,c}(u,v)=\min\{M(u,v),\max\{d_2,u-b+d_2,v-a+d_2,u+v-a-b+d_2\}\},
\end{equation}
where
\begin{equation}\label{d_1}d_1=W(a,b)+c,\end{equation}
and
\begin{equation}\label{d_2}d_2=M(a,b)-c.\end{equation}
Observe that $c$ is small enough so that everywhere close to the boundary of the square $\II^2$ copula $W$ prevails in the definition of $\underline{C}^{a,b,c}$. The proof for $\overline{C}^{b,a,c}$ goes similarly. Note that $\underline{C}^{a,b,c}$ and $\overline{C}^{b,a,c}$ are shuffles of $M$, compare \cite[{\S}3.2.3]{Nels} and  \cite[\S3.6]{DuSe} (cf.\ also \cite{N}), so they are automatically copulas. It is also clear that $\underline{C}^{a,b,c}$ satisfies Condition \eqref{eq:asym_point}, since $\underline{C}^{a,b,c}(b,a)=W(b,a)$ and $\underline{C}^{a,b,c}(a,b)=d_1=W(a,b)+c$. The fact that $\overline{C}^{b,a,c}$ satisfies this condition goes in a similar way using the definition of $d_2$.

\textbf{Remark.} Observe that these bounds of the imprecise copula $\mathcal{C}_0$ are shuffles of $M$ according to \cite[{\S}3.2.3]{Nels} and all local bounds of imprecise copulas that we found in the literature so far are shuffles of $M$.

To compute the values of various measures of concordance we need the values of $\cQ$ introduced in Section \ref{sec:prelim} for various copulas such as $W$, $\Pi$, $M$, and $\underline{C}^{a,b,c}$, respectively $\overline{C}^{b,a,c}$. Recall that $d_1$ and $d_2$ are given by (\ref{d_1}) and \eqref{d_2}.

The following proposition is proved in \cite[Proposition 3\&4] {KoBuKoMoOm2}
. It was also pointed out there that these results are symmetric with respect to the main diagonal and to the counter-diagonal.

\begin{proposition} \label{prop1}
Let $(a,b)\in\II^2$ and $0\leqslant c\leqslant d_\CC^*(a,b)$. For copulas $\underline{C}^{a,b,c}$ and $\overline{C}^{b,a,c}$ it holds:
\begin{enumerate}[(a)]
\item $\cQ(W, \underline{C}^{a,b,c}) = 4d_1(1-a-b+d_1) -1,$
\item $\cQ(\Pi, \underline{C}^{a,b,c}) = 2d_1(1-a-b+d_1)(1-a-b+2d_1)-\dfrac13,$
\item $\cQ(\underline{C}^{a,b,c}, \underline{C}^{a,b,c}) = 4d_1(1-a-b+d_1) -1.$
\item $\cQ(W, \overline{C}^{b,a,c}) =
        (a-1)^2+(b-1)^2+2d_2(a+b-d_2)-1$  if $d_2 \leqslant \min \{ 1-a,1-b,2a+b-1,a+2b-1\}$
\item $\cQ(\Pi, \overline{C}^{b,a,c}) = \dfrac13-2(a+b-2d_2)(a-d_2)(b-d_2),$
\item $\cQ(\overline{C}^{b,a,c}, \overline{C}^{b,a,c}) = 1 - 4(a-d_2)(b-d_2).$
\item $\cQ(M, \overline{C}^{b,a,c}) = 1 - 4(a-d_2)(b-d_2),$
\end{enumerate}
\end{proposition}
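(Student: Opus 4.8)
By \eqref{concordance} each entry of the proposition equals $4\int_{\II^2}C_2\,dC_1-1$ for an explicit pair $C_1,C_2$ drawn from $\{W,\Pi,M,\underline{C}^{a,b,c},\overline{C}^{b,a,c}\}$. Since $W$, $M$, $\underline{C}^{a,b,c}$ and $\overline{C}^{b,a,c}$ are shuffles of $M$, the doubly stochastic measure each of them induces is purely singular, carried by finitely many segments of slope $\pm1$ in $\II^2$ on which it is (a normalized copy of) one--dimensional Lebesgue measure; whereas $d\Pi$ is planar Lebesgue measure. So every integral is either a finite sum of line integrals of $C_2$ along the segments of $C_1$, or (when $C_1=\Pi$) the area integral $\int_{\II^2}C_2(u,v)\,du\,dv$. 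In particular $\cQ(W,C)=4\int_0^1 C(t,1-t)\,dt-1$ and $\cQ(M,C)=4\int_0^1 C(t,t)\,dt-1$, so (a), (d) and (g) reduce to integrating a single section of a bound copula, (b) and (e) to a double integral of a bound copula, and (c) and (f) to integrating a bound copula along its own shuffle segments.

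\textbf{Step 1: geometry.} Using the symmetry of the statement under transposition $(a,b)\mapsto(b,a)$ and under reflection in the counter--diagonal --- both manifest in \eqref{eq:asym cop left}--\eqref{eq:asym cop right} and compatible with the symmetries of $\cQ$ --- I would reduce to one position regime for $(a,b)$, say $a\le b$, and within it to the sub--cases fixed by the signs of $a+b-1$ and of the quantities occurring in the side conditions; in the principal sub--regime $d_1=c$ and $d_2=a-c$. From the max--min formulas I would then read off the piecewise--linear graphs of the sections $t\mapsto \underline{C}^{a,b,c}(t,1-t)$, $t\mapsto\overline{C}^{b,a,c}(t,1-t)$, $t\mapsto\overline{C}^{b,a,c}(t,t)$ --- each a capped ``tent''/trapezoid determined by $a$, $b$ and $d_1$ or $d_2$ --- and the segments carrying the mass of $\underline{C}^{a,b,c}$ and of $\overline{C}^{b,a,c}$ that are needed for (c) and (f).

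\textbf{Step 2: integration.} Each remaining computation is then the elementary area under a capped piecewise--linear function, or a finite sum of such. For (b) and (e) I would split $\II^2$ into the region on which the bound copula coincides with $W$ (resp.\ $M$) and its complement, integrating each part separately. Throughout I would track the provenance of the side condition in (d): it is precisely the requirement that, under the stated inequalities, the anti--diagonal section of $\overline{C}^{b,a,c}$ be governed by the ``inner'' branch of \eqref{eq:asym cop right} without being clipped by $M$, by $0$, or by the edges of $\II^2$ --- equivalently, that $(a,b)$ lie far enough from the corners of the square; when this fails, a different formula holds. The final answers can be sanity--checked at the endpoint $c=0$, where $\underline{C}^{a,b,c}=W$ and $\overline{C}^{b,a,c}=M$, against the known values $\cQ(W,W)=-1$, $\cQ(M,M)=1$, $\cQ(\Pi,W)=-\tfrac13$, $\cQ(\Pi,M)=\tfrac13$.

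\textbf{Main obstacle.} The integrals themselves are routine; the real difficulty is the combinatorial bookkeeping of the shuffle geometry --- identifying, in each position regime of $(a,b)$ relative to the two diagonals and to $\partial\II^2$, exactly which segments carry the mass of each bound copula and with what mass, and detecting where the piecewise--linear integrands switch branches. The two symmetries cut the number of regimes down, but one still has to confirm that the ``generic'' branch is the operative one precisely under hypotheses such as the one in (d), and to handle the clipping against $M$, $W$ and the boundary of $\II^2$ with care.
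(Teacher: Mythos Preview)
The paper does not actually prove Proposition~\ref{prop1}: it is simply quoted from \cite[Propositions 3 \& 4]{KoBuKoMoOm2}, so there is no in--paper argument to compare against. Your outline is the natural direct route and is correct in its essentials: use the symmetry of $\cQ$ to write each value as $4\int C_2\,dC_1-1$ with $C_1\in\{W,M,\Pi,\underline{C}^{a,b,c},\overline{C}^{b,a,c}\}$, exploit that $W$, $M$ and the two bounds are shuffles of $M$ so that the integrals in (a), (c), (d), (f), (g) reduce to line integrals along explicit segments of slope $\pm1$, and compute (b), (e) as planar integrals of the piecewise--linear bounds. Your reading of the side condition in (d) as the regime in which the anti--diagonal section of $\overline{C}^{b,a,c}$ is governed by the inner branch of \eqref{eq:asym cop right}, unclipped by $M$ or the boundary, is the right interpretation, and the $c=0$ sanity checks you list are valid (indeed $\underline{C}^{a,b,0}=W$ and $\overline{C}^{b,a,0}=M$).

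One small caution on bookkeeping: your ``principal sub--regime'' with $d_1=c$ and $d_2=a-c$ presupposes both $a\le b$ and $a+b\le1$; the diagonal and counter--diagonal symmetries you invoke do reduce everything to this quadrant, but you should state explicitly that \eqref{eq:asym cop left}--\eqref{eq:asym cop right} transform correctly under $(u,v)\mapsto(v,u)$ and $(u,v)\mapsto(1-v,1-u)$ (with the appropriate swap of $a,b$ and adjustment of $d_1,d_2$), since the formulas in the proposition are written for general $(a,b)$ and the reader needs to see why the single regime suffices. With that made precise, your plan is complete; the rest is, as you say, routine piecewise--linear integration.
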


As we have already observed, copulas $\underline{C}^{a,b,c}$ and $\overline{C}^{b,a,c}$ are shuffles of $M$, so that \eqref{eq:asym cop left} and \eqref{eq:asym cop right} can be rewritten as
\begin{equation}\label{eq:C shuffle}
  \begin{split}
     \underline{C}^{a,b,c} & =M(4,\{[0,a-d_1],[a-d_1,a],[a, 1-b+d_1], [1-b+d_1,1],(4,2,3,1),-1\})\\
     \overline{C}^{a,b,c}  & =M(4,\{[0,d_2],[d_2,b],[b,a+b-d_2], [a+b-d_2,1],(1,3,2,4),1\}),
  \end{split}
\end{equation}
where the last parameter in the above expression for the shuffle of $M$ is a function $f:\{1,2,\ldots,n\}\to\{-1,1\}$ which is in the first line of Equation \eqref{eq:C shuffle} identically equal to $-1$ and in the second one identically equal to 1.

We recall the imprecise copula of \cite{NeUbFl}
\begin{equation}\label{eq:beta}
  \BB_t:=\{C\in\CC\,|\,\beta(C)=t\}=\left\{C\in\CC\,\left|\, C\left(\dfrac12,\dfrac12\right)=\dfrac{1+t}{2}\right.\right\}\quad \mbox{for}\quad t\in [-1,1]
\end{equation}
and relate it to the imprecise copula defined by Equation \eqref{eq:asym_point}, actually we will relate its local bounds $\underline{B}_t=\inf\BB_t$ and $\overline{B}_t=\sup\BB_t$ to the bounds given by \eqref{eq:asym cop left} and \eqref{eq:asym cop right}.

\textbf{Remark.} Here comes the main point of our method. Although the imprecise copula $\mathcal{B}_t$ defined by \eqref{eq:beta} is a completely different family of copulas as the imprecise copula $\mathcal{C}_0$ defined in \cite{KoBuKoMoOm2} and explained above, its local bounds can be computed, somewhat surprisingly, as a special case of the local bounds of $\mathcal{C}_0$. This fact will be proven now and will serve as the basis of our investigation.

\begin{lemma}\label{lem1} The local bounds of the imprecise copula defined by Equation \eqref{eq:beta} can be expressed as special cases of copulas defined by Equations \eqref{eq:asym cop left} and \eqref{eq:asym cop right}:
  \begin{enumerate}[(a)]
    \item $\underline{B}_t=\underline{C}^{\frac12,\frac12, \frac{1+t}{4}}$,
    \item $\overline{B}_t=\overline{C}^{\frac12,\frac12, \frac{1-t}{4}}$.
  \end{enumerate}
\end{lemma}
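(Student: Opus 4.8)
The plan is to establish part (a) by exhibiting the right-hand side as, simultaneously, a member of $\BB_t$ and a pointwise lower bound for every copula in $\BB_t$: a lower bound that itself lies in the set is automatically the pointwise infimum of that set (cf.\ \eqref{eq:inf:sup}), i.e.\ $\underline{B}_t$. I would then get (b) by running the same scheme with the obvious dualisations ($W\leftrightarrow M$, $\min\leftrightarrow\max$, lower estimates $\leftrightarrow$ upper estimates, $\inf\leftrightarrow\sup$); alternatively (b) can be read off from (a) using the order-reversing reflection $C\mapsto C^{\sigma_1}$, which maps $\BB_t$ onto $\BB_{-t}$, provided one also records the corresponding transformation of \eqref{eq:asym cop left}.

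For the membership step I would substitute $a=b=\frac12$ into \eqref{d_1}, obtaining $d_1=W(\frac12,\frac12)+\frac{1+t}{4}=\frac{1+t}{4}$, and then read off from the shuffle representation \eqref{eq:C shuffle} that $\underline{C}^{\frac12,\frac12,\frac{1+t}{4}}$ is the shuffle of $M$ with breakpoints $0\leqslant\frac{1-t}{4}\leqslant\frac12\leqslant\frac{3+t}{4}\leqslant1$; since these are correctly ordered for every $t\in[-1,1]$, the function is a genuine copula. Evaluating \eqref{eq:asym cop left} at $(\frac12,\frac12)$, all four entries of the inner minimum collapse to $d_1$ while $W(\frac12,\frac12)=0\leqslant d_1$, so the value there is $\frac{1+t}{4}$; since $\beta(C)=t$ means, by \eqref{beta}, that $C(\frac12,\frac12)=\frac{1+t}{4}$, this shows $\underline{C}^{\frac12,\frac12,\frac{1+t}{4}}\in\BB_t$.

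For the lower-bound step I would fix $C\in\BB_t$ and $(u,v)\in\II^2$ and use only that $C$ is $1$-Lipschitz and nondecreasing in each coordinate and that $C(\frac12,\frac12)=\frac{1+t}{4}=d_1$. Comparing $C(u,\frac12)$ to $C(\frac12,\frac12)$ gives $C(u,\frac12)\geqslant d_1+\min\{0,u-\frac12\}$, and comparing $C(u,v)$ to $C(u,\frac12)$ then gives $C(u,v)\geqslant d_1+\min\{0,u-\frac12\}+\min\{0,v-\frac12\}$; a short case analysis identifies the right-hand side with $\min\{d_1,\,u-\frac12+d_1,\,v-\frac12+d_1,\,u+v-1+d_1\}$, which together with the trivial bound $C(u,v)\geqslant W(u,v)$ yields $C(u,v)\geqslant\underline{C}^{\frac12,\frac12,\frac{1+t}{4}}(u,v)$. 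This completes (a). For (b) I would run the mirror version: put $a=b=\frac12$ in \eqref{d_2} to get $d_2=M(\frac12,\frac12)-\frac{1-t}{4}=\frac{1+t}{4}$, check via \eqref{eq:C shuffle} that $\overline{C}^{\frac12,\frac12,\frac{1-t}{4}}$ is a shuffle of $M$ (breakpoints $0\leqslant\frac{1+t}{4}\leqslant\frac12\leqslant\frac{3-t}{4}\leqslant1$) with value $\frac{1+t}{4}$ at $(\frac12,\frac12)$, and use the reverse monotonicity/Lipschitz estimates together with $C\leqslant M$ to see it is an upper bound for $\BB_t$; being in $\BB_t$, it is the maximum, hence $\sup\BB_t=\overline{B}_t$.

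I do not expect a genuine obstacle here. The two points that actually need attention are: first, the conceptual observation that although $d_\CC^*(\frac12,\frac12)=0$ rules out $c>0$ in the original family $\CC_0$, the closed forms \eqref{eq:asym cop left}--\eqref{eq:asym cop right} remain meaningful and are still shuffles of $M$ for the relevant $c=\frac{1\pm t}{4}$ — this is exactly what makes the reduction possible and is the part worth emphasising; and second, the elementary identity $d_1+\min\{0,u-\frac12\}+\min\{0,v-\frac12\}=\min\{d_1,\,u-\frac12+d_1,\,v-\frac12+d_1,\,u+v-1+d_1\}$ (and its maximum counterpart), whose four-case verification should be carried out rather than waved at. Everything else is a direct application of the defining properties of copulas and quasi-copulas.
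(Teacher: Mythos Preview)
Your argument is correct and in fact more self-contained than the paper's. The paper does not establish membership and extremality from first principles; instead it quotes the explicit formulas for $\underline{B}_t$ and $\overline{B}_t$ from \cite[Theorem~1]{NeUbFl}, namely
\[
\underline{B}_t(u,v)=\max\Bigl\{0,\,u+v-1,\,\tfrac{1+t}{4}-\bigl(\tfrac12-u\bigr)^+-\bigl(\tfrac12-v\bigr)^+\Bigr\}
\]
and its dual, and then simply checks that these agree with \eqref{eq:asym cop left} and \eqref{eq:asym cop right} specialised to $a=b=\tfrac12$, $c=\tfrac{1\pm t}{4}$ (equivalently, with the shuffle descriptions obtained from \eqref{eq:C shuffle}). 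So the paper treats the lemma as an identification of two already-known closed forms, whereas you \emph{derive} the bound of $\BB_t$ directly via the Lipschitz/monotonicity estimate $C(u,v)\geqslant C(\tfrac12,\tfrac12)+\min\{0,u-\tfrac12\}+\min\{0,v-\tfrac12\}$ and its dual. Your route removes the dependence on \cite{NeUbFl} and, as you rightly flag, makes explicit why the formulas \eqref{eq:asym cop left}--\eqref{eq:asym cop right} continue to define copulas for these parameters even though $d_\CC^*(\tfrac12,\tfrac12)=0$ puts them outside the original parameter range --- a point the paper leaves implicit. The paper's route is shorter only because it outsources the extremality to \cite{NeUbFl}.
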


\begin{proof}
  Using \eqref{eq:C shuffle} we get
  \begin{equation*}
  \begin{split}
     \underline{C}^{\frac12,\frac12, \frac{1+t}{4}} & =M\left(4,\left\{\left[0,\frac{1-t}{4}\right],\left[\frac{1-t}{4},\frac12 \right], \left[\frac12, \frac{3+t}{4}\right],\left[ \frac{3+t}{4},1\right], (4,2,3,1),-1\right\}\right)\\
     \overline{C}^{\frac12,\frac12, \frac{1-t}{4}}  & =M\left(4,\left\{\left[0,\frac{1-t}{4}\right],\left[\frac{1-t}{4},\frac12 \right], \left[\frac12, \frac{3+t}{4}\right],\left[ \frac{3+t}{4},1\right],(1,3,2,4),1\right\}\right),
  \end{split}
\end{equation*}
  Following \cite[Theorem 1]{NeUbFl} we have
  \[
    \underline{B}_t(u,v)=\max\left\{0,u+v-1,\frac{1+t}{4}- \left(\dfrac12-u\right)^+-\left(\dfrac12-v\right)^+\right\},\ \ \underline{B}_t\left(\frac12,\frac12\right)= \frac{1+t}{4},
  \]
  which amounts to the same as above at the points given after a short computation. In a similar way we conclude
  \[
    \overline{B}_t(u,v)=\min\left\{u,v,\frac{1+t}{4}+ \left(u-\dfrac12\right)^++\left(v-\dfrac12\right)^+\right\},\ \ \overline{B}_t\left(\frac12,\frac12\right)= \frac{1+t}{4},
  \]
and the same can be computed from the above at the points given.
\end{proof}

This lemma will enable us to transmit the results of computations of \cite{KoBuKoMoOm2} presented in Proposition \ref{prop1} in determining the imprecise copula given by \eqref{eq:beta}.

\section{ Blomqvist's beta vs.\ Spearman's rho}\label{sec:rho}

In this section we find all possible pairs $(\beta(C),\rho(C))$ for a copula $C$. First we compute the values of $\rho$ at the bounds of the imprecise copula $\mathcal{B}_t$.

\begin{theorem}\label{thm:rho} Given any $t\in[-1,1]$ the value of $\rho$ is bounded by:
  \begin{enumerate}[(a)]
    \item $\rho(\underline{B}_t)=\dfrac{3}{16}(1+t)^3-1$, and
    \item $\rho(\overline{B}_t)=1-\dfrac{3}{16}(1-t)^3$.
  \end{enumerate}
\end{theorem}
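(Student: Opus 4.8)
The plan is to reduce everything to Proposition \ref{prop1} via Lemma \ref{lem1}. Recall from \eqref{rho} that $\rho(C) = 3\,\cQ(C,\Pi)$, and that $\cQ$ is symmetric in its two arguments, so equivalently $\rho(C) = 3\,\cQ(\Pi,C)$. By Lemma \ref{lem1}(a)--(b) we have $\underline{B}_t = \underline{C}^{\frac12,\frac12,\frac{1+t}{4}}$ and $\overline{B}_t = \overline{C}^{\frac12,\frac12,\frac{1-t}{4}}$, so the value of $\rho$ at the two bounds of $\BB_t$ is obtained simply by specializing the formulas of Proposition \ref{prop1}(b) (for $\underline{B}_t$) and Proposition \ref{prop1}(e) (for $\overline{B}_t$) to $a = b = \tfrac12$ with the appropriate $c$. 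The structural observation that makes the answer clean is that at $(a,b) = (\tfrac12,\tfrac12)$ one has $a + b = 1$, hence $1 - a - b = 0$, which collapses most of the factors in those formulas.

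Concretely, for part (a) I would set $a = b = \tfrac12$ and $c = \tfrac{1+t}{4}$, compute $d_1 = W(\tfrac12,\tfrac12) + c = \tfrac{1+t}{4}$ from \eqref{d_1}, and substitute into Proposition \ref{prop1}(b): since $1-a-b = 0$ the right-hand side becomes $2d_1\cdot d_1\cdot 2d_1 - \tfrac13 = 4d_1^3 - \tfrac13 = \tfrac{(1+t)^3}{16} - \tfrac13$, and multiplying by $3$ gives $\rho(\underline{B}_t) = \tfrac{3}{16}(1+t)^3 - 1$. For part (b) I would set $b = a = \tfrac12$ and $c = \tfrac{1-t}{4}$, compute $d_2 = M(\tfrac12,\tfrac12) - c = \tfrac{1+t}{4}$ from \eqref{d_2}, note that then $a + b - 2d_2 = \tfrac{1-t}{2}$ and $a - d_2 = b - d_2 = \tfrac{1-t}{4}$, and substitute into Proposition \ref{prop1}(e): the right-hand side becomes $\tfrac13 - 2\cdot\tfrac{1-t}{2}\cdot\bigl(\tfrac{1-t}{4}\bigr)^2 = \tfrac13 - \tfrac{(1-t)^3}{16}$, and multiplying by $3$ gives $\rho(\overline{B}_t) = 1 - \tfrac{3}{16}(1-t)^3$.

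There is essentially no difficult step here: once Lemma \ref{lem1} is in hand the whole proof is a substitution and a short simplification. The only subtlety worth flagging is that Proposition \ref{prop1} is stated under the hypothesis $0 \le c \le d_\CC^*(a,b)$, while $d_\CC^*(\tfrac12,\tfrac12) = 0$ by \eqref{eq:kle mes}; so strictly speaking one should first note that the shuffle-of-$M$ representations \eqref{eq:C shuffle} with $a = b = \tfrac12$ still define genuine copulas for every $t \in [-1,1]$ (the break points $0 \le \tfrac{1-t}{4} \le \tfrac12 \le \tfrac{3+t}{4} \le 1$ are correctly ordered), and that the identities of Proposition \ref{prop1}(b),(e) are polynomial identities in $a,b,c$ established by integrating $\Pi$ against these shuffles, so they stay valid in this limiting configuration. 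If one prefers not to invoke Proposition \ref{prop1} outside its stated range, the alternative is to compute $\cQ(\Pi,\underline{B}_t)$ and $\cQ(\Pi,\overline{B}_t)$ directly from \eqref{concordance} using the explicit descriptions of $\underline{B}_t$ and $\overline{B}_t$ recorded in the proof of Lemma \ref{lem1}; in either approach the arithmetic is the short computation above.
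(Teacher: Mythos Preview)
Your proof is correct and follows essentially the same route as the paper: identify $\underline{B}_t$ and $\overline{B}_t$ with the special cases of $\underline{C}^{a,b,c}$ and $\overline{C}^{b,a,c}$ via Lemma~\ref{lem1}, then plug $a=b=\tfrac12$ with the appropriate $c$ into Proposition~\ref{prop1}(b) and (e) and simplify using $1-a-b=0$. Your remark about the hypothesis $0\le c\le d_\CC^*(a,b)$ being formally violated at $(a,b)=(\tfrac12,\tfrac12)$ is a genuine observation that the paper does not address explicitly; your justification via the shuffle description and the polynomial nature of the identities is an appropriate way to close that gap.
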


\begin{proof}
  Using first Equation \eqref{rho}, then Lemma \ref{lem1}\emph{(a)}, and finally Proposition \ref{prop1}\emph{(b)}, we show that
  \[
  \begin{split}
     \rho(\underline{B}_t) & =3\cQ(\underline{B}_t,\Pi) =3\cQ(\underline{C}^{\frac12,\frac12, \frac{1+t}{4}},\Pi) \\
       &=6d_1(1-a-b+d_1)(1-a-b+2d_1)-1=6c\cdot c\cdot (2c)-1.
  \end{split}
  \]
  To get item \emph{(a)} observe that $c=\dfrac{1+t}{4}$. Next, we follow a similar pattern in proving item \emph{(b)}, Equation \eqref{rho}, then Lemma \ref{lem1}\emph{(b)}, and finally Proposition \ref{prop1}\emph{(e)} in order to find:
  \[
  \begin{split}
     \rho(\overline{B}_t) & =3\cQ(\overline{B}_t,\Pi) =3\cQ(\overline{C}^{\frac12,\frac12, \frac{1-t}{4}},\Pi) \\
       &=1-6(a+b-2d_2)(a-d_2)(b-d_2) =1- 6(1-2d_2)\left(\dfrac12-d_2 \right)^2\\
       &=1-12c^3.
  \end{split}
  \]
  Finally, observe that $c=\dfrac{1-t}{4}$.
\end{proof}

Figure 1 exhibits the set of all possible  pairs $(\beta(C),\rho(C))$ for a copula $C$. The expressions for the bounds of the shaded regions are given in Theorem \ref{thm:rho} and the following corollary.

\begin{figure}[h]
            \includegraphics[width=5cm]{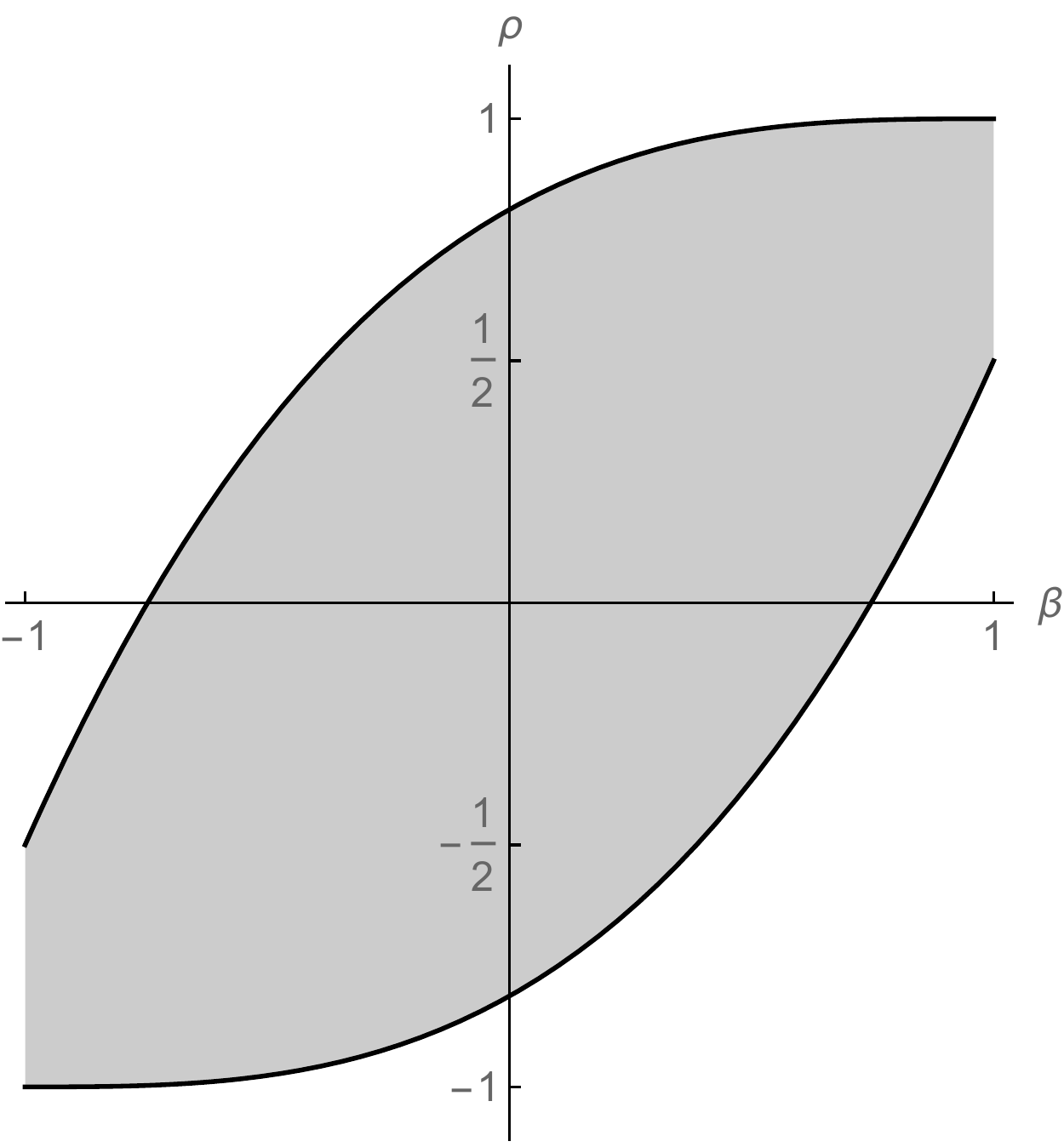} \hfil \includegraphics[width=5cm]{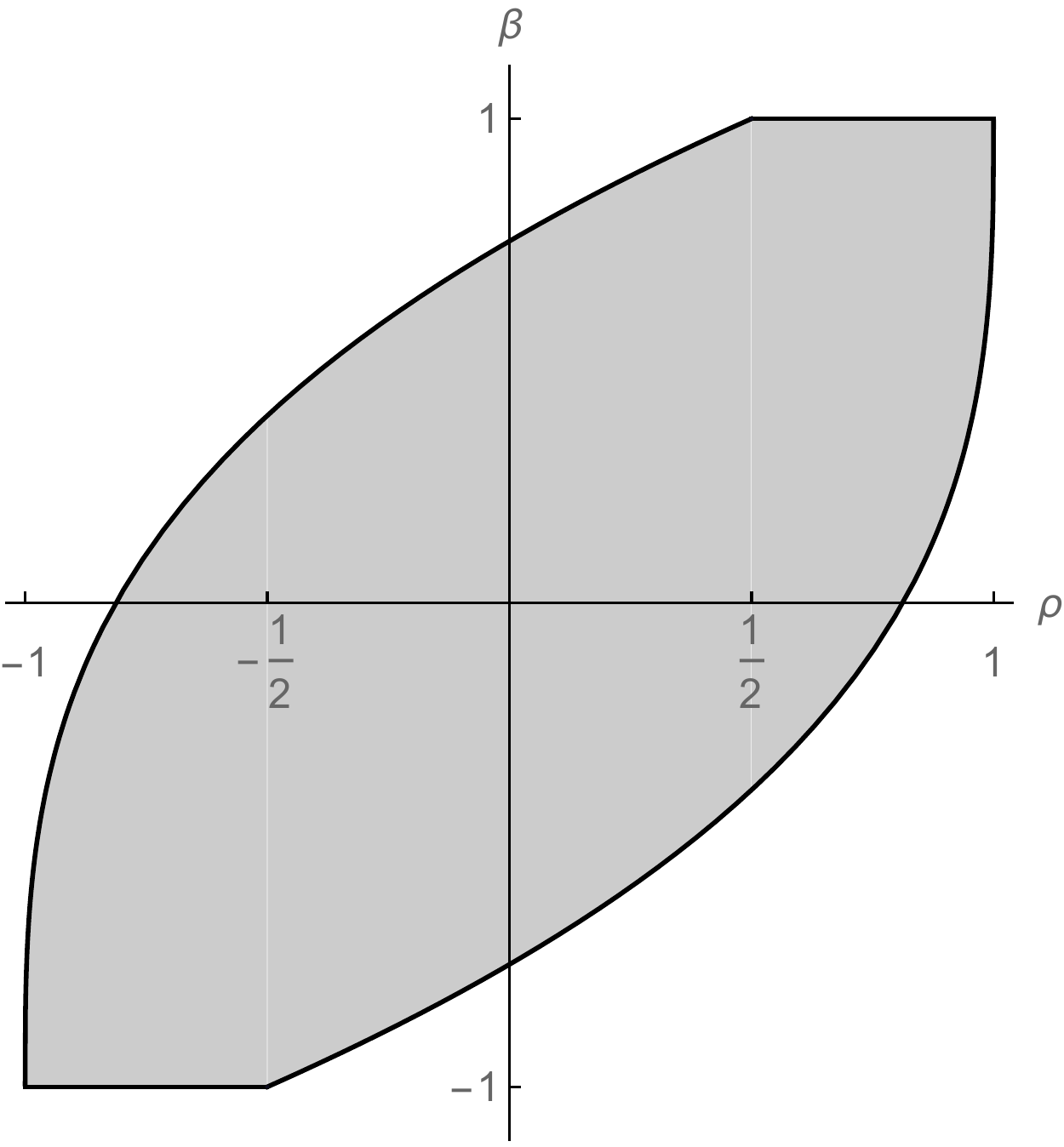}
            \caption{ Blomqvist's beta vs.\ Spearman's rho }\label{fig1}
\end{figure}

\begin{corollary}
  If $\rho(C)=\rho$ for some $C\in\CC$ and $\rho\in[-1,1]$, then
  \[
    \left.\begin{matrix}
      -1, & \mbox{if } \rho\leqslant-\dfrac12 \\
      1-2\sqrt[3]{\dfrac{2(1-\rho)}{3}}, & \mbox{otherwise}
    \end{matrix}\right\}\leqslant \beta(C)\leqslant
    \begin{cases}       1, & \mbox{if } \rho\geqslant\dfrac12 \\       -1+2\sqrt[3]{\dfrac{2(1+\rho)}{3}}, & \mbox{otherwise}.     \end{cases}
  \]
\end{corollary}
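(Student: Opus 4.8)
The plan is to invert the monotone relationship between $\beta$ and $\rho$ established in Theorem \ref{thm:rho}. The key structural fact is that the pairs $(\beta(C),\rho(C))$ that are attainable by \emph{some} copula $C$ fill exactly the region between the two curves $t\mapsto\rho(\underline{B}_t)$ and $t\mapsto\rho(\overline{B}_t)$; this is because $\underline{B}_t\leqslant C\leqslant\overline{B}_t$ for every $C$ with $\beta(C)=t$ (by Lemma \ref{lem1} and the definition of $\underline{B}_t,\overline{B}_t$ as the infimum and supremum of $\mathcal{B}_t$), and $\rho$ is monotone, so $\rho(\underline{B}_t)\leqslant\rho(C)\leqslant\rho(\overline{B}_t)$; conversely every intermediate value of $\rho$ is realized along a continuous path of copulas joining $\underline{B}_t$ to $\overline{B}_t$ (e.g.\ a convex combination, using property (5) of a measure of concordance together with the intermediate value theorem). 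Hence fixing $\rho(C)=\rho$, the admissible values of $\beta(C)=t$ are precisely those $t\in[-1,1]$ for which $\rho(\underline{B}_t)\leqslant\rho\leqslant\rho(\overline{B}_t)$, and the corollary is just the explicit solution of this pair of inequalities for $t$.

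First I would record that $t\mapsto\rho(\underline{B}_t)=\frac{3}{16}(1+t)^3-1$ is strictly increasing on $[-1,1]$, running from $-1$ (at $t=-1$) to $\frac{3}{16}\cdot 8-1=\frac12$ (at $t=1$), and similarly $t\mapsto\rho(\overline{B}_t)=1-\frac{3}{16}(1-t)^3$ is strictly increasing from $-\frac12$ to $1$. The constraint $\rho\leqslant\rho(\overline{B}_t)$ gives the lower bound on $t$: if $\rho\leqslant-\frac12$ it holds for all $t\in[-1,1]$, so the lower bound is $-1$; if $\rho>-\frac12$, solving $1-\frac{3}{16}(1-t)^3=\rho$ yields $(1-t)^3=\frac{16(1-\rho)}{3}$, i.e.\ $t=1-2\sqrt[3]{\frac{2(1-\rho)}{3}}$ (rewriting $\sqrt[3]{16/3}=2\sqrt[3]{2/3}$), and since $\rho(\overline{B}_t)$ is increasing, $\rho\leqslant\rho(\overline{B}_t)$ is equivalent to $t$ being at least this value. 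Symmetrically, the constraint $\rho(\underline{B}_t)\leqslant\rho$ gives the upper bound on $t$: if $\rho\geqslant\frac12$ it holds for all $t$, so the upper bound is $1$; otherwise solving $\frac{3}{16}(1+t)^3-1=\rho$ gives $t=-1+2\sqrt[3]{\frac{2(1+\rho)}{3}}$, and $\rho(\underline{B}_t)\leqslant\rho$ is equivalent to $t$ being at most this value. Assembling these four cases is exactly the displayed formula.

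I expect the only real subtlety — and the step worth spelling out carefully — is the claim that the attainable $(\beta,\rho)$ region is precisely the filled region between the two boundary curves, rather than merely contained in it. One direction (containment in the region) is immediate from monotonicity of $\rho$ and the sandwich $\underline{B}_t\leqslant C\leqslant\overline{B}_t$; the reverse direction requires exhibiting, for each $t$ and each target value $\rho\in[\rho(\underline{B}_t),\rho(\overline{B}_t)]$, an actual copula with that pair of values. The natural construction is the convex combination $C_\lambda=(1-\lambda)\underline{B}_t+\lambda\overline{B}_t$, $\lambda\in[0,1]$, which is a copula by convexity of $\CC$, satisfies $\beta(C_\lambda)=t$ since $C_\lambda(\tfrac12,\tfrac12)=\tfrac{1+t}{2}$ for all $\lambda$, and has $\rho(C_\lambda)$ depending continuously on $\lambda$ (by property (5) of a measure of concordance, or directly since $\rho$ is an affine functional of the copula via $\rho(C)=12\int_{\II^2}C\,du\,dv-3$), hence by the intermediate value theorem attains every value between $\rho(\underline{B}_t)$ and $\rho(\overline{B}_t)$. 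Once this is in place the corollary follows purely by the elementary inversion above; I would present the region argument as a short lemma or remark so that the analogous corollaries in Sections \ref{sec:tau}, \ref{sec:footrule}, and \ref{sec:gamma} can simply cite it.
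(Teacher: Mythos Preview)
Your argument is correct and is precisely what the paper leaves implicit: the corollary is stated without proof, immediately after Theorem \ref{thm:rho} and Figure \ref{fig1}, as a direct inversion of the bounds $\rho(\underline{B}_t)\leqslant\rho(C)\leqslant\rho(\overline{B}_t)$ coming from $\underline{B}_t\leqslant C\leqslant\overline{B}_t$ and the monotonicity property (\ref{monotone}) of $\rho$. Your additional sharpness step via convex combinations $C_\lambda=(1-\lambda)\underline{B}_t+\lambda\overline{B}_t$ is not needed for the corollary as literally stated (which asserts only the inequality), but it does supply the justification --- nowhere spelled out in the paper --- for the claim accompanying Figure \ref{fig1} that the shaded region is \emph{exactly} the set of attainable pairs $(\beta(C),\rho(C))$.
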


\section{ Blomqvist's beta vs.\ Kendall's tau}\label{sec:tau}

In this section we study the relation between Blomqvist's beta and Kendall's tau on the set of all copulas. First we determine the value of $\tau$ at the bounds of the set $\mathcal{B}_t$ for any possible $t$.


\begin{theorem}\label{thm:tau} Given any $t\in[-1,1]$ it holds that
  \begin{enumerate}[(a)]
    \item $\tau(\underline{B}_t)=\dfrac{(1+t)^2}{4}-1$
    \item $\tau(\overline{B}_t)=1-\dfrac{(1-t)^2}{4}$.
  \end{enumerate}
\end{theorem}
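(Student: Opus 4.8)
The plan is to mimic exactly the proof of Theorem \ref{thm:rho}, replacing Spearman's rho by Kendall's tau and invoking the appropriate parts of Proposition \ref{prop1}. Recall that $\tau(C)=\cQ(C,C)$ by \eqref{tau}, so for the lower bound I would write $\tau(\underline B_t)=\cQ(\underline B_t,\underline B_t)$, then use Lemma \ref{lem1}(a) to replace $\underline B_t$ by $\underline C^{\frac12,\frac12,\frac{1+t}{4}}$, and finally apply Proposition \ref{prop1}(c), which gives $\cQ(\underline C^{a,b,c},\underline C^{a,b,c})=4d_1(1-a-b+d_1)-1$. With $a=b=\frac12$ and $c=\frac{1+t}{4}$ we have $d_1=W(\frac12,\frac12)+c=c$, so $1-a-b+d_1=c$ and the expression collapses to $4c^2-1=\frac{(1+t)^2}{4}-1$, which is item (a).

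For item (b) the pattern is the same: $\tau(\overline B_t)=\cQ(\overline B_t,\overline B_t)$, then Lemma \ref{lem1}(b) replaces $\overline B_t$ by $\overline C^{\frac12,\frac12,\frac{1-t}{4}}$, and Proposition \ref{prop1}(f) gives $\cQ(\overline C^{b,a,c},\overline C^{b,a,c})=1-4(a-d_2)(b-d_2)$. Here $a=b=\frac12$, $c=\frac{1-t}{4}$, and $d_2=M(\frac12,\frac12)-c=\frac12-c$, so $a-d_2=b-d_2=c$, yielding $1-4c^2=1-\frac{(1-t)^2}{4}$, which is item (b). It is worth double-checking that the side condition attached to Proposition \ref{prop1}(d) is not needed here, since items (c) and (f) — the ones I use — carry no such restriction; only the computation of $\cQ(W,\overline C^{b,a,c})$ does.

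The only genuine point to verify is that the substitution $a=b=\tfrac12$ together with $c=\tfrac{1+t}{4}$ (resp. $c=\tfrac{1-t}{4}$) really does land inside the admissible range $0\le c\le d_\CC^*(a,b)$ required by Proposition \ref{prop1}. Since $d_\CC^*(\tfrac12,\tfrac12)=\min\{\tfrac12,\tfrac12,\tfrac12,\tfrac12,0\}=0$, one might worry that only $c=0$ is allowed; however, Lemma \ref{lem1} has already established that $\underline B_t$ and $\overline B_t$ coincide with $\underline C^{\frac12,\frac12,\frac{1+t}{4}}$ and $\overline C^{\frac12,\frac12,\frac{1-t}{4}}$ as bona fide copulas (they are exhibited there explicitly as shuffles of $M$), and the formulas in Proposition \ref{prop1} were derived from those shuffle representations, so they remain valid for these particular copulas regardless of the asymmetry interpretation. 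Thus no real obstacle arises — the proof is a routine two-line specialization in each case — and I would present it in the same compact style as Theorem \ref{thm:rho}, simply citing \eqref{tau}, Lemma \ref{lem1}, and Proposition \ref{prop1}(c) and (f), and then noting the value of $c$ in terms of $t$ at the end of each computation.
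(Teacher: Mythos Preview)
Your proposal is correct and follows essentially the same route as the paper: apply \eqref{tau}, invoke Lemma~\ref{lem1}(a)/(b) to identify $\underline{B}_t$ and $\overline{B}_t$ with the appropriate $\underline{C}^{\frac12,\frac12,c}$ and $\overline{C}^{\frac12,\frac12,c}$, and then specialize Proposition~\ref{prop1}(c)/(f) with $a=b=\tfrac12$ and the given value of $c$. Your extra remark about the admissibility condition $0\le c\le d_\CC^*(\tfrac12,\tfrac12)=0$ is a welcome clarification that the paper glosses over; the resolution you give (the formulas come from the explicit shuffle representations established in Lemma~\ref{lem1}) is the right one.
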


\begin{proof}
  We compute, using first Equation \eqref{tau}, then Lemma \ref{lem1}\emph{(a)} and finally Proposition \ref{prop1}\emph{(c)}, that
  \[
  \begin{split}
     \tau(\underline{B}_t) & =\cQ(\underline{B}_t, \underline{B}_t) =\cQ(\underline{C}^{\frac12,\frac12, \frac{1+t}{4}},\underline{C}^{\frac12,\frac12, \frac{1+t}{4}}) \\
       &=4d_1(1-a-b+d_1)-1=4c^2-1=\dfrac{(1+t)^2}{4}-1.
  \end{split}
  \]
  To get item \emph{(a)} observe that $c=\dfrac{1+t}{4}$. Next, we follow a similar pattern in proving item \emph{(b)}: Equation \eqref{tau}, then Lemma \ref{lem1}\emph{(b)}, and finally Proposition \ref{prop1}\emph{(f)}, yield
  \[
  \begin{split}
     \rho(\overline{B}_t) & =3\cQ(\overline{B}_t,\Pi) =3\cQ(\overline{C}^{\frac12,\frac12, \frac{1-t}{4}},\Pi) \\
       &=1-4(a-d_2)(b-d_2) =1- 4c^2,
  \end{split}
  \]
  and observe at the end that $c=\dfrac{1-t}{4}$.
\end{proof}

Figure 2 depicts the set of all possible  pairs $(\beta(C),\tau(C))$ for a copula $C$. The expressions for the bounds of the shaded regions are given in Theorem \ref{thm:tau} and Corollary \ref{cor:tau}.

\begin{figure}[h]
            \includegraphics[width=5cm]{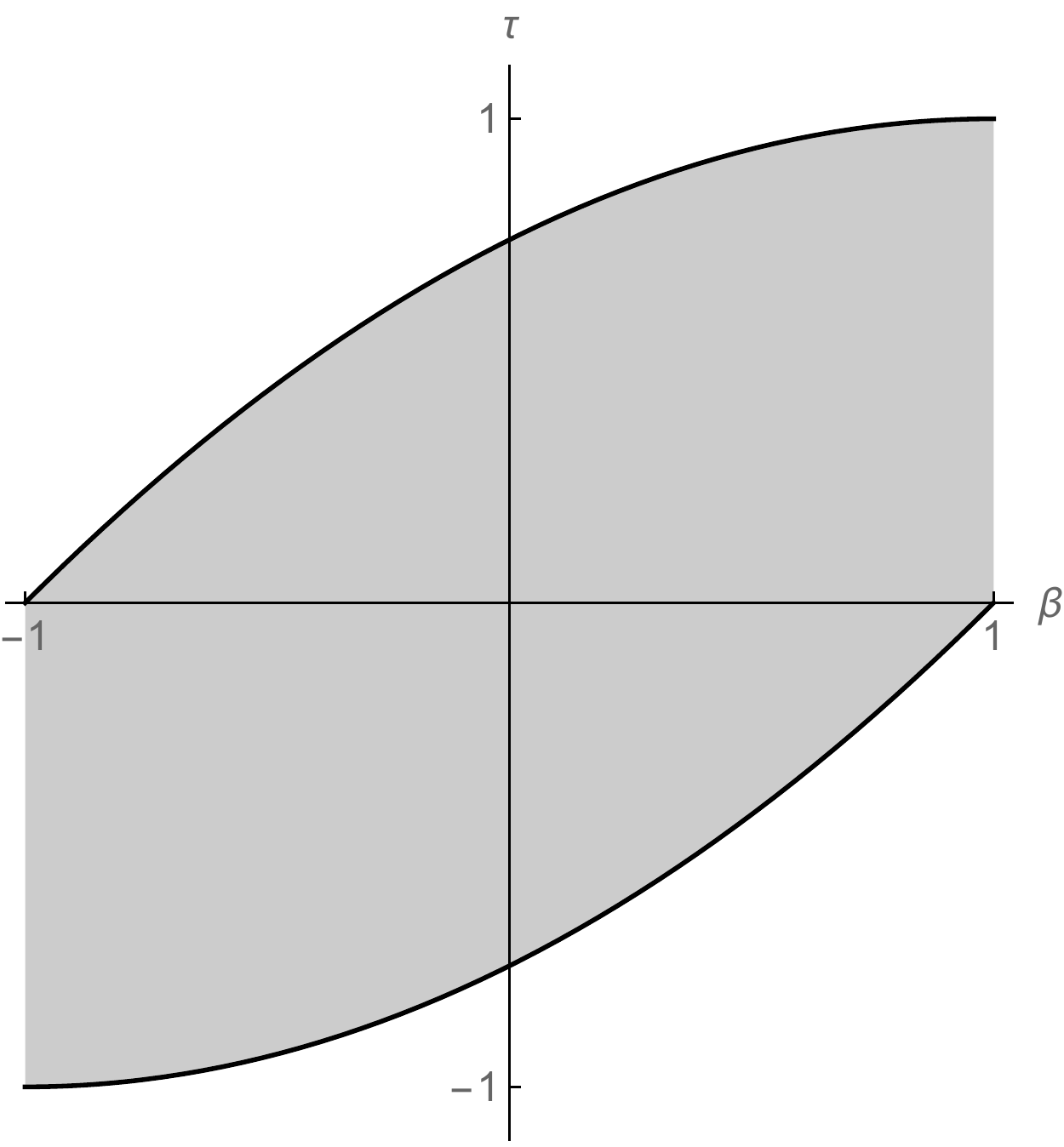} \hfil \includegraphics[width=5cm]{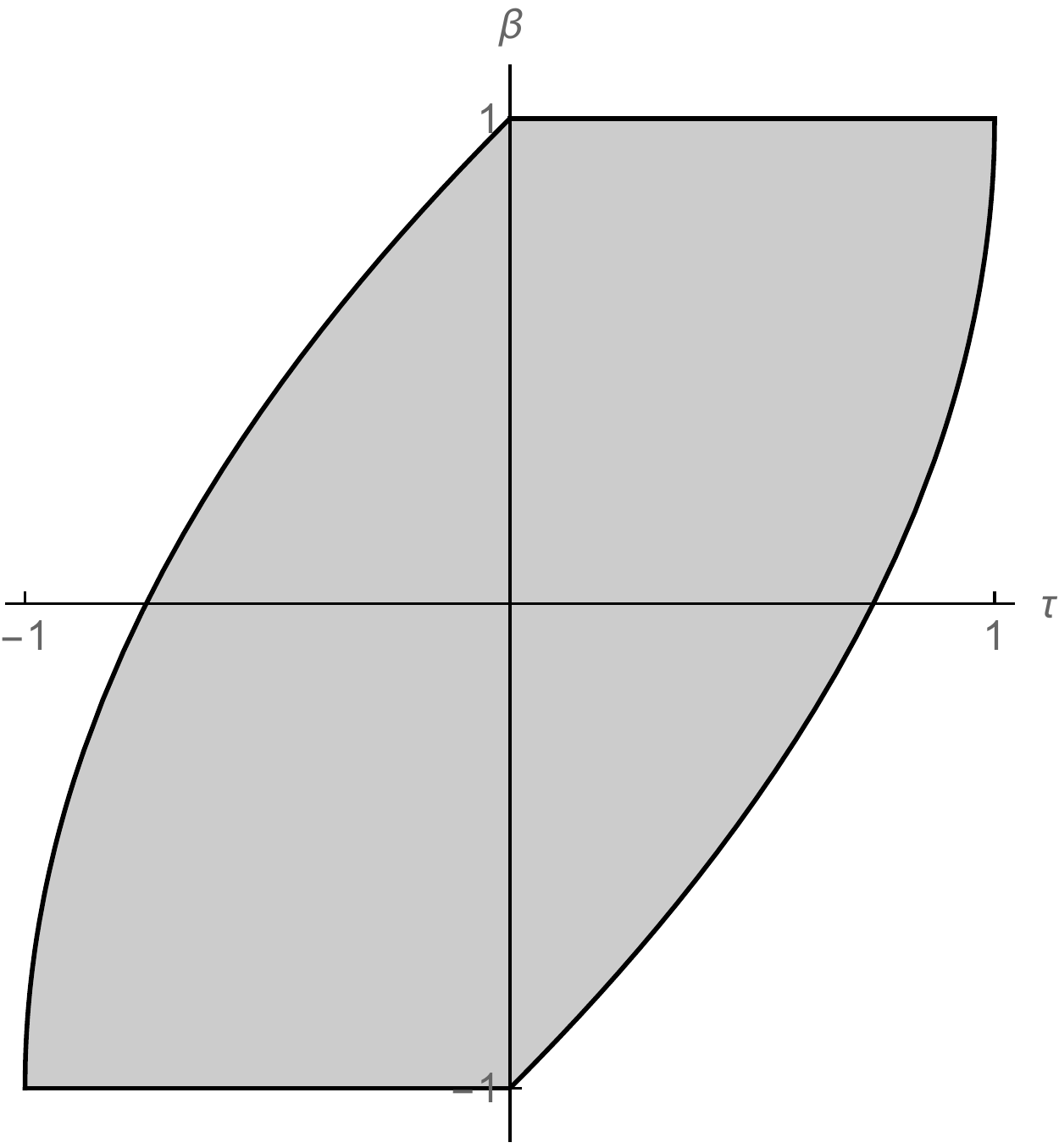}
            \caption{ Blomqvist's beta vs.\ Kendall's tau }\label{fig2}
\end{figure}

\begin{corollary}\label{cor:tau}
  If $\tau(C)=\tau$ for some $C\in\CC$ and $\tau\in[-1,1]$, then
  \[
    \left.\begin{matrix}
      -1, & \mbox{if } \tau\leqslant0 \\
      1-2\sqrt{1-\tau}, & \mbox{otherwise}
    \end{matrix}\right\}\leqslant \beta(C)\leqslant
    \begin{cases}       1, & \mbox{if } \tau\geqslant0\\       -1+2\sqrt{1+\tau}, & \mbox{otherwise}.     \end{cases}
  \]
\end{corollary}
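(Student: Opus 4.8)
The plan is to run the same inversion argument that will yield the (here unproved) corollary of Section \ref{sec:rho}, but now feeding in Theorem \ref{thm:tau} instead. Fix $C\in\CC$ with $\tau(C)=\tau$ and set $t=\beta(C)$, so that $C\in\BB_t$ in the notation of \eqref{eq:beta}. Since $\underline{B}_t=\inf\BB_t$ and $\overline{B}_t=\sup\BB_t$, we have $\underline{B}_t\leqslant C\leqslant\overline{B}_t$ pointwise, and because Kendall's tau is monotone (property \eqref{monotone}) this forces $\tau(\underline{B}_t)\leqslant\tau\leqslant\tau(\overline{B}_t)$. Substituting the closed forms of Theorem \ref{thm:tau} turns this into the two inequalities $\tfrac{(1+t)^2}{4}-1\leqslant\tau$ and $\tau\leqslant 1-\tfrac{(1-t)^2}{4}$, and the whole statement then follows by solving these for $t=\beta(C)$.

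For the first inequality, $(1+t)^2\leqslant 4(1+\tau)$; since $1+t\geqslant 0$ this is equivalent to $1+t\leqslant 2\sqrt{1+\tau}$, i.e.\ $\beta(C)\leqslant -1+2\sqrt{1+\tau}$. When $\tau\geqslant 0$ the right-hand side is at least $1$, so the bound is automatic and the effective upper bound is $1$; when $\tau<0$ it is the nontrivial value $-1+2\sqrt{1+\tau}$. Symmetrically, the second inequality reads $(1-t)^2\leqslant 4(1-\tau)$, hence $\beta(C)\geqslant 1-2\sqrt{1-\tau}$ (again choosing the nonnegative branch because $1-t\geqslant 0$), which is automatic for $\tau\leqslant 0$ and nontrivial for $\tau>0$. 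This reproduces the piecewise lower and upper bounds exactly, with the two branches agreeing at $\tau=0$.

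If one also wants the bounds to be best possible, so that the shaded region of Figure \ref{fig2} is precisely $\{(\beta(C),\tau(C)):C\in\CC\}$, I would fix any $\beta_0$ in the stated interval for a given $\tau$ and note that the two inequalities above, read backwards, say exactly that $\tau(\underline{B}_{\beta_0})\leqslant\tau\leqslant\tau(\overline{B}_{\beta_0})$. Both $\underline{B}_{\beta_0}$ and $\overline{B}_{\beta_0}$ lie in $\BB_{\beta_0}$ (by Lemma \ref{lem1} their value at $(\tfrac12,\tfrac12)$ is $\tfrac{1+\beta_0}{4}$), and $\BB_{\beta_0}$ is convex, so the segment $\lambda\mapsto\lambda\,\underline{B}_{\beta_0}+(1-\lambda)\,\overline{B}_{\beta_0}$ stays inside $\BB_{\beta_0}$; since $\tau$ is continuous along this segment (the uniform-continuity property of a measure of concordance), the intermediate value theorem produces a copula on it whose Kendall's tau equals $\tau$ while its beta remains $\beta_0$.

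I do not expect a genuine obstacle here: the mathematical content is the monotonicity step together with Theorem \ref{thm:tau}. The only place demanding care is the bookkeeping in the square-root inversion — keeping the nonnegative branch because $1\pm t\in[0,2]$, and tracking the threshold $\tau=0$ at which each bound collapses to $\pm1$ — and, if the sharpness half is included, the routine convexity-and-continuity argument above rather than anything algebraic.
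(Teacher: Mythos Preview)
Your argument is correct and is exactly the approach the paper has in mind: the corollary is stated without proof, as an immediate inversion of Theorem \ref{thm:tau} via the monotonicity of $\tau$ and the pointwise sandwich $\underline{B}_t\leqslant C\leqslant\overline{B}_t$. Your added sharpness paragraph (convex interpolation inside $\BB_{\beta_0}$ plus continuity of $\tau$) goes beyond what the paper writes out but is the right way to justify the claim, implicit in Figure \ref{fig2}, that the bounds are attained.
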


\section{Blomqvist's beta vs.\ Spearman's footrule}\label{sec:footrule}

In order to compute Spearman's footrule we will first insert copula $C=\underline{C}^{a,b,c}$ into Equation \eqref{phi}. So, we start by computing
\[
    \begin{split}
       \cQ(\underline{C}^{a,b,c},M) & = -1+ 4\int_{0}^{a-d_1} M(u,4u)\,du+4\int_{a-d_1}^{a}M(u,a+b-d_1-u)\,du  \\
         &+ 4\int_{a}^{1-b+d_1}M(u,1+d_1-u)\,du+ 4\int_{1-b+d_1}^{1}M(u,1-u)\,du.
    \end{split}
\]
Next we apply Lemma \ref{lem1}\emph{(a)} to get
\[
    \begin{split}
       \cQ(\underline{B}_t,M) &=\cQ(\underline{C}^{\frac12,\frac12,\frac{1+t}{4}},M) = -1+ 4\int_{0}^{\frac12-\frac{1+t}{4}} u\,du+ 4\int_{\frac12-\frac{1+t}{4}}^{\frac12} M\left(u,1-\frac{1+t}{4}-u\right)\,du  \\
         &+ 4\int_{\frac12}^{\frac12+\frac{1+t}{4}} M\left(u,1+\frac{1+t}{4}-u\right)\,du+ 4\int_{\frac12+\frac{1+t}{4}}^{1}(1-u)\,du   \\
         &= -1+ 4\int_{0}^{\frac{3-t}{8}} u\,du+ 4\int_{\frac{3-t}{8}}^{\frac12} \left(\frac{3-t}{4}-u\right)\,du  + 4\int_{\frac12}^{\frac{5+t}{8}} u\,du\\
         &+ 4\int_{\frac{5+t}{8}}^{\frac{5+t}{4}} \left(\frac{3+t}{4} -u\right)\,du+ 4\int_{\frac{5+t}{8}}^{1}(1-u)\,du.
    \end{split}
\]
On the second step of these computations we needed a careful examination of which one of the two functions in the arguments of copula $M$ is smaller resulting in a rearrangement of the intervals of integration. A straightforward computation now brings us to
\begin{equation}\label{eq:integral}
  \cQ(\underline{B}_t,M)=\frac{(1+t)^2}{8}\quad\mbox{and}\quad \phi(\underline{B}_t)=\frac{3(1+t)^2}{16}-\dfrac12.
\end{equation}

\begin{theorem}\label{thm:phi} Given any $t\in[-1,1]$ we have
  \begin{enumerate}[(a)]
    \item $\phi(\underline{B}_t)=\dfrac{3(1+t)^2}{16}-\dfrac12$
    \item $\phi(\overline{B}_t)=1-\dfrac{3(1-t)^2}{8}$.
  \end{enumerate}
\end{theorem}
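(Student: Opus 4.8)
Part (a) is already done in the text: equation \eqref{eq:integral} gives $\phi(\underline{B}_t)=\frac{3(1+t)^2}{16}-\frac12$, so the statement merely records it and I only need to supply a proof of (b). The plan is to mirror exactly the computation carried out for $\underline{B}_t$, but now for $\overline{B}_t$, using Lemma~\ref{lem1}\emph{(b)} to replace $\overline{B}_t$ by $\overline{C}^{\frac12,\frac12,\frac{1-t}{4}}$ and then evaluating $\cQ(\overline{B}_t,M)$ directly from the shuffle-of-$M$ representation in \eqref{eq:C shuffle}. Concretely, with $a=b=\frac12$ and $c=\frac{1-t}{4}$ we have $d_2=M(a,b)-c=\frac12-\frac{1-t}{4}=\frac{1+t}{4}$, and the four defining intervals of the shuffle become $[0,d_2]=[0,\frac{1+t}{4}]$, $[d_2,b]=[\frac{1+t}{4},\frac12]$, $[b,a+b-d_2]=[\frac12,\frac{3-t}{4}]$, $[a+b-d_2,1]=[\frac{3-t}{4},1]$ with permutation $(1,3,2,4)$ and sign $+1$. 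Writing out $4\int_{\II}M(u,v)\,du$ along the graph of this shuffle splits the integral into four pieces, exactly as in the displayed computation for $\underline{B}_t$.

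Alternatively — and this is the shorter route I would actually present — I would invoke Proposition~\ref{prop1}\emph{(g)}, which gives $\cQ(M,\overline{C}^{b,a,c})=1-4(a-d_2)(b-d_2)$. Combined with the symmetry of $\cQ$ in its two arguments (property~(1) of the concordance function recalled in Section~\ref{sec:prelim}) and Lemma~\ref{lem1}\emph{(b)}, this yields
\[
  \cQ(\overline{B}_t,M)=\cQ(M,\overline{C}^{\frac12,\frac12,\frac{1-t}{4}})=1-4\Bigl(\tfrac12-d_2\Bigr)^2=1-4c^2=1-\tfrac{(1-t)^2}{4}.
\]
Feeding this into the definition \eqref{phi}, $\phi(C)=\frac12\bigl(3\cQ(C,M)-1\bigr)$, gives
\[
  \phi(\overline{B}_t)=\frac12\left(3\Bigl(1-\tfrac{(1-t)^2}{4}\Bigr)-1\right)=\frac12\left(2-\tfrac{3(1-t)^2}{4}\right)=1-\tfrac{3(1-t)^2}{8},
\]
which is precisely item \emph{(b)}.

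Strictly, to use Proposition~\ref{prop1}\emph{(g)} here I should check that the hypothesis attached to part~\emph{(d)} of that proposition, namely $d_2\le\min\{1-a,1-b,2a+b-1,a+2b-1\}$, is not needed for part~\emph{(g)} — the parenthetical condition in Proposition~\ref{prop1} is stated only for \emph{(d)} — and in any case at $a=b=\tfrac12$ it reads $d_2\le\tfrac12$, i.e. $c\ge0$, which holds since $t\le1$. So there is no obstacle of substance. The only genuinely delicate point, and the one I would flag, is the same bookkeeping issue that the authors already highlighted for $\underline{B}_t$: if one proceeds by the direct integration route rather than by citing Proposition~\ref{prop1}\emph{(g)}, one must carefully track, on each of the four subintervals, which of the two arguments of $M$ is the smaller, since the minimum switches at the interior breakpoints; getting those cases right is where an error would creep in. Citing Proposition~\ref{prop1}\emph{(g)} sidesteps this entirely, so that is the proof I would write down.
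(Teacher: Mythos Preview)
Your proposal is correct and follows essentially the same route as the paper: item \emph{(a)} is recorded from \eqref{eq:integral}, and item \emph{(b)} is obtained by combining Lemma~\ref{lem1}\emph{(b)} with Proposition~\ref{prop1}\emph{(g)} to get $\cQ(\overline{B}_t,M)=1-4c^2$ and then plugging into \eqref{phi}. Your additional remarks on the symmetry of $\cQ$ and on the hypothesis attached to Proposition~\ref{prop1}\emph{(d)} are accurate but not needed here, since part~\emph{(g)} carries no such restriction.
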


\begin{proof}
  Item \emph{(a)} was proven above. In the proof of item \emph{(b)}, we use Equation \eqref{phi}, then Lemma \ref{lem1}\emph{(b)} and finally Proposition \ref{prop1}\emph{(g)} to get:
  \[
  \begin{split}
     \phi(\overline{B}_t) & =\dfrac32\cQ(\overline{B}_t,M)-\frac12 =\dfrac32\cQ(\overline{C}^{\frac12,\frac12, \frac{1-t}{4}},M)-\frac12 \\
       &=\dfrac32(1- 4c^2)-\frac12=1-\dfrac38(1-t)^2.
  \end{split}
  \]
\end{proof}

In Figure 3 we display the set of all possible  pairs $(\beta(C), \phi(C))$ for a copula $C$. The expressions for the bounds of the shaded regions are given in Theorem \ref{thm:phi} and Corollary \ref{cor:phi}.

\begin{figure}[h]
            \includegraphics[width=5cm]{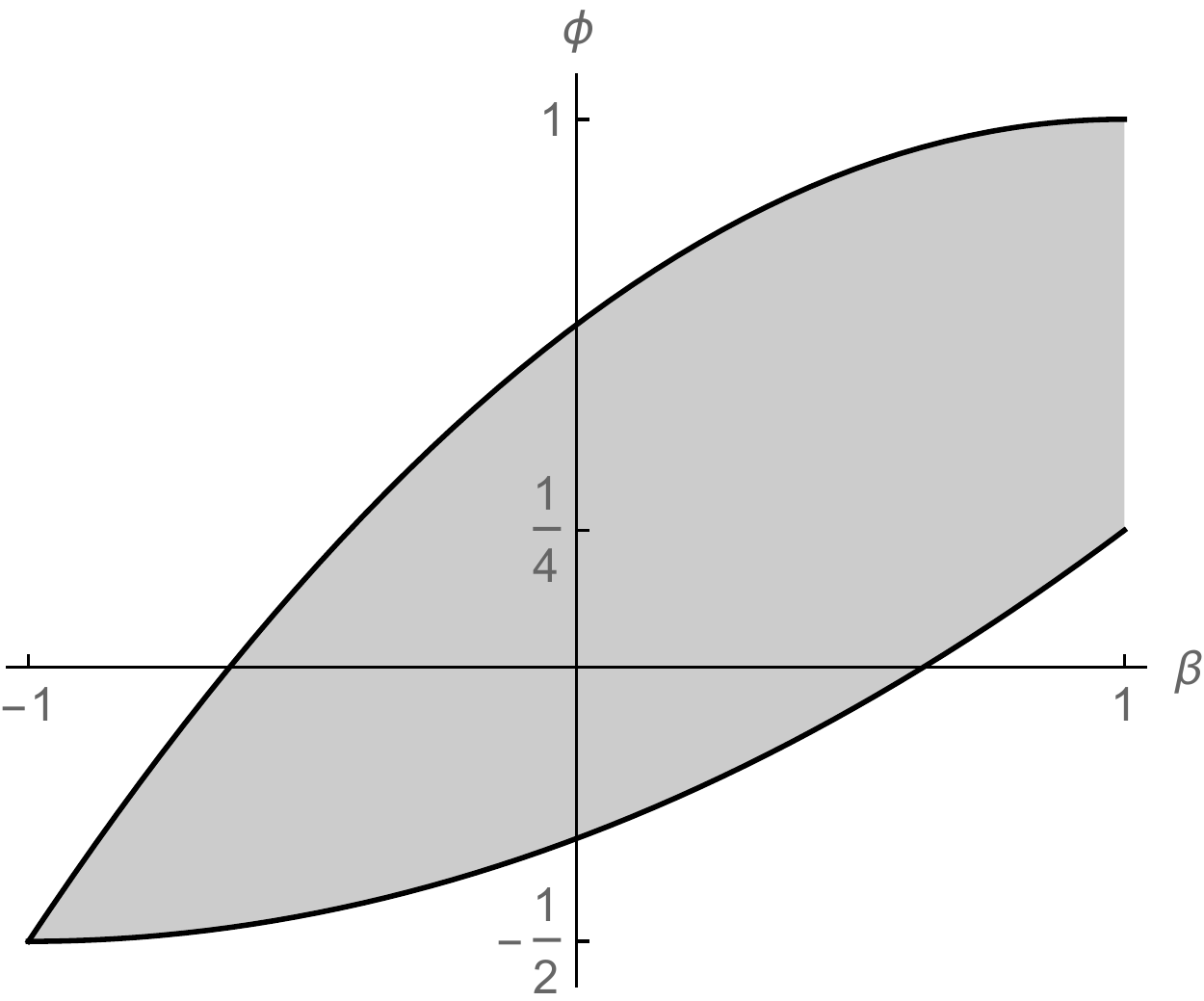} \hfil \includegraphics[width=5cm]{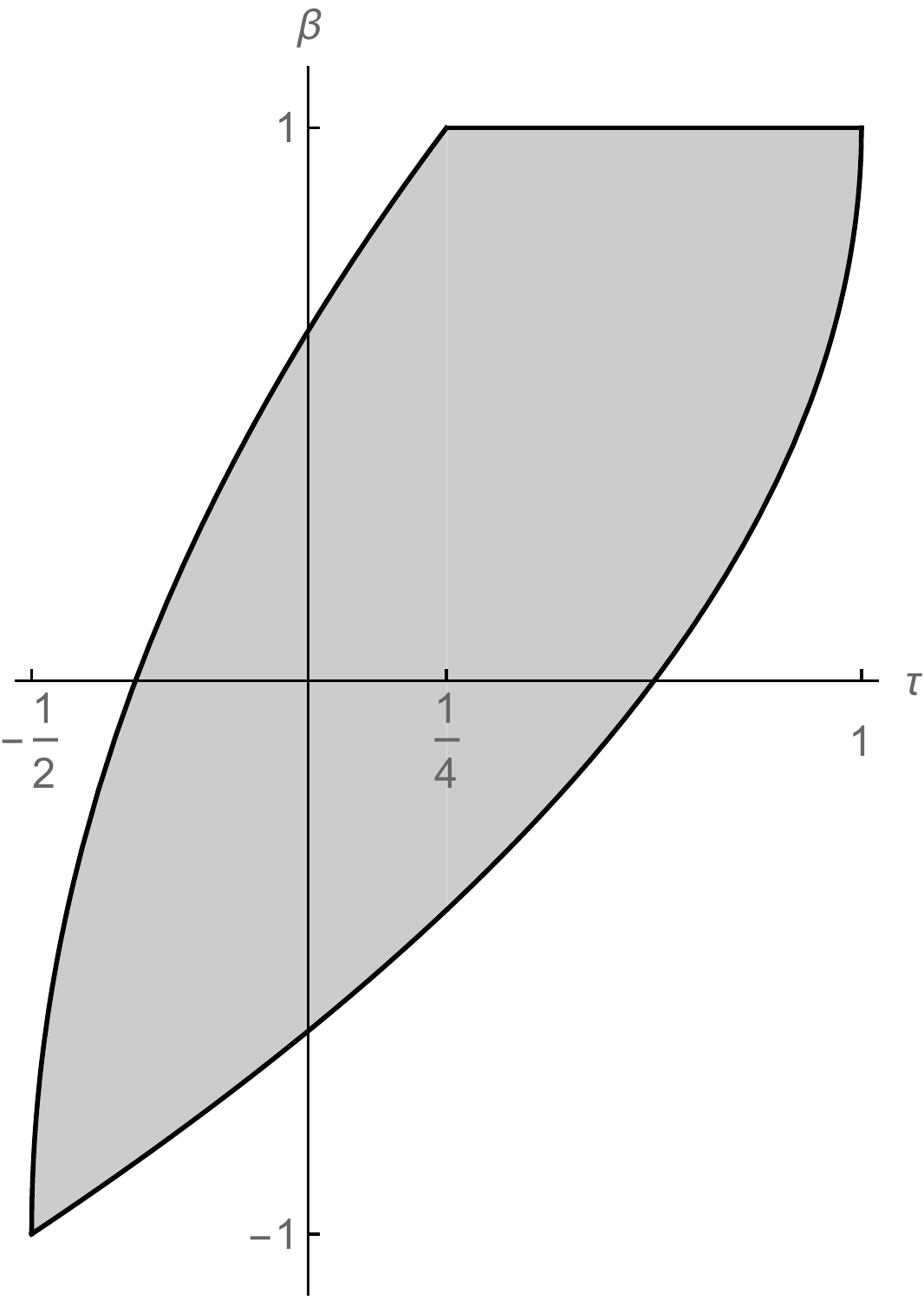}
            \caption{ Blomqvist's beta vs.\ Spearman's footrule }\label{fig3}
\end{figure}

\begin{corollary}\label{cor:phi}
  If $\phi(C)=\phi$ for some $C\in\CC$ and $\phi\in\left[-\dfrac12,1\right]$, then
  \[
    1-4\sqrt{\frac{1-\phi}{6}}\
    \leqslant \beta(C)\leqslant
    \begin{cases}       1, & \mbox{if } \dfrac14\leqslant\phi\leqslant1\\       -1+4\sqrt{\frac{1+2\phi}{6}}, & \mbox{otherwise}.     \end{cases}
  \]
\end{corollary}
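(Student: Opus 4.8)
The plan is to extract the bounds on $\beta(C)$ from the two curves $\phi(\underline{B}_t)$ and $\phi(\overline{B}_t)$ computed in Theorem \ref{thm:phi}, exactly as the corresponding corollaries for $\rho$ and $\tau$ were obtained from Theorems \ref{thm:rho} and \ref{thm:tau}. The key structural fact I would invoke is that, for fixed $t$, every copula $C$ with $\beta(C)=t$ lies between the local bounds $\underline{B}_t \leqslant C \leqslant \overline{B}_t$, and since Spearman's footrule $\phi$ is monotone (being a positive affine function of the monotone concordance functional $\cQ(\cdot,M)$), we get $\phi(\underline{B}_t)\leqslant\phi(C)\leqslant\phi(\overline{B}_t)$. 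Moreover $\phi(\underline{B}_t)$ and $\phi(\overline{B}_t)$ are continuous in $t$, so as $t$ ranges over $[-1,1]$ the attainable set of pairs $(\beta(C),\phi(C))$ is precisely the region between these two curves; consequently, fixing $\phi(C)=\phi$ and asking for the range of $\beta$ amounts to inverting the two curve equations for $t$.

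The concrete steps: first I would record the two boundary relations from Theorem \ref{thm:phi}, namely $\phi=\frac{3(1+t)^2}{16}-\frac12$ along the lower bound and $\phi=1-\frac{3(1-t)^2}{8}$ along the upper bound. Solving the first for $t$ gives $1+t = 4\sqrt{\frac{1+2\phi}{6}}$, hence $t = -1+4\sqrt{\frac{1+2\phi}{6}}$; solving the second gives $1-t = \sqrt{\frac{8(1-\phi)}{3}}$, i.e. $t = 1-4\sqrt{\frac{1-\phi}{6}}$. These are the two formulas appearing in the statement. Next I would determine the ranges of validity: the map $t\mapsto 1-\frac{3(1-t)^2}{8}$ is increasing on $[-1,1]$ and takes the value $1$ only at $t=1$, so the upper bound $\beta(C)\leqslant -1+4\sqrt{\frac{1+2\phi}{6}}$ is the binding one exactly when this quantity is $<1$, which happens iff $\phi<\frac14$; for $\phi\geqslant\frac14$ the constraint degenerates to $\beta(C)\leqslant 1$, which is vacuous. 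For the lower bound, $t\mapsto \frac{3(1+t)^2}{16}-\frac12$ is increasing on $[-1,1]$ with minimum $-\frac12$ at $t=-1$, so $\phi$ ranges over $[-\frac12,1]$ (matching the hypothesis on $\phi$) and the formula $\beta(C)\geqslant 1-4\sqrt{\frac{1-\phi}{6}}$ is valid throughout, never degenerating (at $\phi=1$ it gives $\beta\geqslant 1$, forcing $C=M$, which is consistent).

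The one point requiring a little care — and the closest thing to an obstacle — is justifying that the region between the two curves is exactly the attainable set, i.e. that for every $t$ both bound values $\phi(\underline{B}_t),\phi(\overline{B}_t)$ are themselves attained (they are, since $\underline{B}_t$ and $\overline{B}_t$ belong to $\BB_t$ as shown after Lemma \ref{lem1}, being shuffles of $M$ with the correct value at $(\tfrac12,\tfrac12)$) and that intermediate values of $\phi$ are attained for each fixed $t$ by convexity of $\BB_t$ together with continuity of $\phi$. Granting this, the corollary follows by reading off the inverse relations and their domains of validity as above. I would also remark that the lower curve is strictly below the upper curve for all $t\in(-1,1)$ and they meet only at the endpoints $(\beta,\phi)=(-1,-\tfrac12)$ and $(1,1)$, which is what produces the lens-shaped shaded region in Figure \ref{fig3}.
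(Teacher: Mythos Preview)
Your argument is the intended one (the paper states the corollary without proof, leaving the inversion of Theorem~\ref{thm:phi} implicit), and your formulas and thresholds are all correct. However, in the paragraph where you determine the ranges of validity you have the two boundary curves swapped in your justifications: the upper bound on $\beta$, namely $-1+4\sqrt{(1+2\phi)/6}$, comes from inverting the \emph{lower} curve $\phi(\underline B_t)=\tfrac{3(1+t)^2}{16}-\tfrac12$, and the threshold $\phi=\tfrac14$ arises because this map has maximum value $\tfrac14$ at $t=1$ (not from any property of $\phi(\overline B_t)$). Likewise the lower bound on $\beta$, namely $1-4\sqrt{(1-\phi)/6}$, comes from inverting the \emph{upper} curve $\phi(\overline B_t)=1-\tfrac{3(1-t)^2}{8}$, whose range over $t\in[-1,1]$ is $[-\tfrac12,1]$, which is why that bound never degenerates. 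This is an expository slip rather than a mathematical error: swap the two curve references in that paragraph and everything lines up.
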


\section{Blomqvist's beta vs.\ Gini's gamma}\label{sec:gamma}

In this section we consider the relationship between the Blomqvists's beta and Gini's gamma. We first compute Gini's gamma at the bounds of the set $\mathcal{B}_t$ for any $t\in [-1,1]$.

\begin{theorem}\label{thm:gamma} Given $t\in[-1,1]$ we have
  \begin{enumerate}[(a)]
    \item $\gamma(\underline{B}_t)=\dfrac{3(1+t)^2}{8}-1$
    \item $\gamma(\overline{B}_t)=1-\dfrac{3(1-t)^2}{8}$.
  \end{enumerate}
\end{theorem}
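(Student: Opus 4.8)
The plan is to read off $\gamma$ at the two bounds straight from its definition \eqref{gamma}, $\gamma(C)=\cQ(C,M)+\cQ(C,W)$, after rewriting $\underline{B}_t$ and $\overline{B}_t$ via Lemma \ref{lem1} as $\underline{C}^{\frac12,\frac12,\frac{1+t}{4}}$ and $\overline{C}^{\frac12,\frac12,\frac{1-t}{4}}$, so that each of the four terms becomes an instance of a quantity already tabulated in Proposition \ref{prop1}. Throughout one sets $a=b=\frac12$: for the lower bound this gives $d_1=W(\frac12,\frac12)+\frac{1+t}{4}=\frac{1+t}{4}$ and $1-a-b+d_1=\frac{1+t}{4}$, while for the upper bound $d_2=M(\frac12,\frac12)-\frac{1-t}{4}=\frac{1+t}{4}$ and $a-d_2=b-d_2=\frac{1-t}{4}$.

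For part (a): the term $\cQ(\underline{B}_t,M)$ has already been computed in \eqref{eq:integral}, where $\cQ(\underline{B}_t,M)=\frac{(1+t)^2}{8}$. For the remaining term I would invoke the symmetry of $\cQ$ and Proposition \ref{prop1}(a), obtaining $\cQ(\underline{B}_t,W)=\cQ(W,\underline{C}^{\frac12,\frac12,\frac{1+t}{4}})=4d_1(1-a-b+d_1)-1=\frac{(1+t)^2}{4}-1$. Adding the two yields $\gamma(\underline{B}_t)=\frac{(1+t)^2}{8}+\frac{(1+t)^2}{4}-1=\frac{3(1+t)^2}{8}-1$, which is item (a).

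For part (b): by symmetry of $\cQ$ and Proposition \ref{prop1}(g), $\cQ(\overline{B}_t,M)=\cQ(M,\overline{C}^{\frac12,\frac12,\frac{1-t}{4}})=1-4(a-d_2)(b-d_2)=1-\frac{(1-t)^2}{4}$, and by Proposition \ref{prop1}(d), $\cQ(\overline{B}_t,W)=\cQ(W,\overline{C}^{\frac12,\frac12,\frac{1-t}{4}})=(a-1)^2+(b-1)^2+2d_2(a+b-d_2)-1$, provided the side condition $d_2\leqslant\min\{1-a,1-b,2a+b-1,a+2b-1\}$ holds; with $a=b=\frac12$ all four entries equal $\frac12$, so the condition reads $\frac{1+t}{4}\leqslant\frac12$, i.e.\ $t\leqslant1$, which is satisfied. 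Substituting $a=b=\frac12$, $d_2=\frac{1+t}{4}$ and simplifying $(1+t)(3-t)=3+2t-t^2$ gives $\cQ(\overline{B}_t,W)=-\frac{(1-t)^2}{8}$, hence $\gamma(\overline{B}_t)=1-\frac{(1-t)^2}{4}-\frac{(1-t)^2}{8}=1-\frac{3(1-t)^2}{8}$, which is item (b).

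The single place where anything could go wrong is the verification of the side condition attached to Proposition \ref{prop1}(d); once one observes that at $a=b=\frac12$ it collapses to $t\leqslant1$, everything else is routine algebraic simplification. As an optional consistency check, note that the formula for $\cQ(\underline{B}_t,W)$ coincides with $\tau(\underline{B}_t)=\cQ(\underline{B}_t,\underline{B}_t)$ of Theorem \ref{thm:tau}(a), as it must, since both equal $4d_1(1-a-b+d_1)-1$ by Proposition \ref{prop1}(a) and (c).
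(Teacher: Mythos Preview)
Your proof is correct and follows essentially the same route as the paper: both split $\gamma$ into $\cQ(\cdot,M)+\cQ(\cdot,W)$, invoke Lemma~\ref{lem1} to pass to $\underline{C}^{\frac12,\frac12,\frac{1+t}{4}}$ and $\overline{C}^{\frac12,\frac12,\frac{1-t}{4}}$, and then read off each term from Proposition~\ref{prop1}(a),(d),(g) together with \eqref{eq:integral}. Your version is in fact slightly more careful, since you explicitly verify the side condition on Proposition~\ref{prop1}(d) and correctly cite (g) for $\cQ(\overline{B}_t,M)$ (the paper's reference to (e) there is a typo).
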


\begin{proof}
  We compute, using first 
  Lemma \ref{lem1}\emph{(a)} and then Proposition \ref{prop1}\emph{(a)}, that
  \[
  \begin{split}
     \cQ(\underline{B}_t,W)
     & =\cQ(\underline{C}^{\frac12,\frac12, \frac{1+t}{4}},W) =4c^2-1=\dfrac{(1+t)^2}{4}-1.
  \end{split}
  \]
  To get item \emph{(a)} use also Equation \eqref{gamma} and the left hand side of Equation \eqref{eq:integral}, so that
  \[
    \gamma(\underline{B}_t)=\cQ(\underline{B}_t,M)+ \cQ(\underline{B}_t,W)=\dfrac{(1+t)^2}{4}-1+\dfrac{(1+t)^2}{8}=-1+ \dfrac38(1+t)^2.
  \]
  In the proof of item \emph{(b)} we use Lemma \ref{lem1}\emph{(b)} and Proposition \ref{prop1}\emph{(e)}:
  \[
  \begin{split}
     \cQ(\overline{B}_t,M) =\cQ(\overline{C}^{\frac12,\frac12, \frac{1-t}{4}},M) =1-2c^2=1-\dfrac{(1-t)^2}{4}.
  \end{split}
  \]
  Also, by Proposition \ref{prop1}\emph{(d)} it follows that
  \[
    \cQ(\overline{B}_t,W)=-\frac{(1-t)^2}8{}.
  \]
  Now, use Equation \eqref{gamma} 
  to get
  \[
    \gamma(\overline{B}_t)=\cQ(\overline{B}_t,M)+ \cQ(\overline{B}_t,W)=1-\dfrac{(1-t)^2}{4}-\dfrac{(1-t)^2}{8}\ = 1-\dfrac38(1-t)^2
  \]
\end{proof}

Figure 4 exhibits the set of all possible pairs $(\beta(C), \gamma(C))$ for a copula $C$. The expressions for the bounds of the shaded regions are given in Theorem \ref{thm:gamma} and Corollary \ref{cor:gamma}.

\begin{figure}[h]
            \includegraphics[width=5cm]{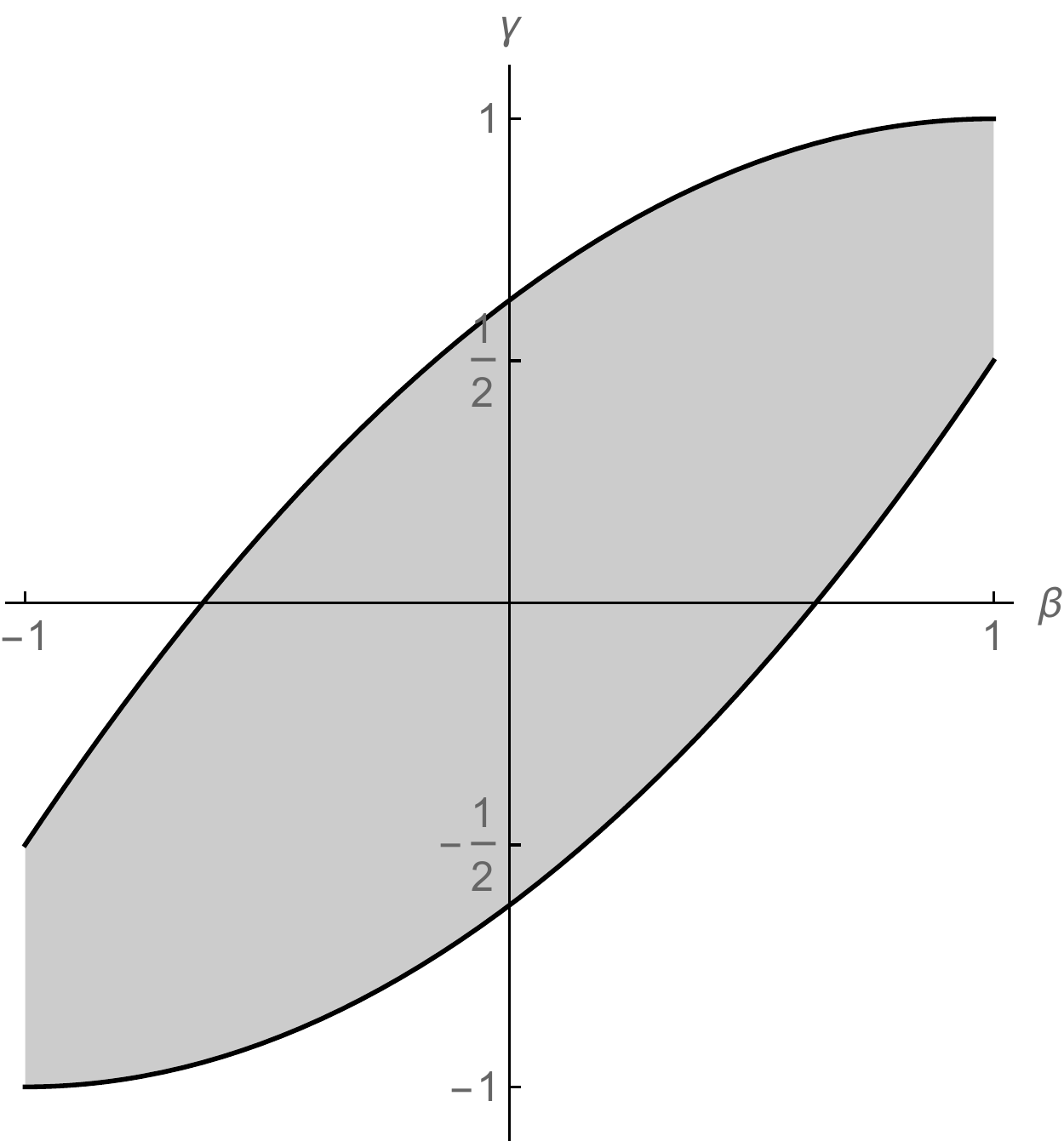} \hfil \includegraphics[width=5cm]{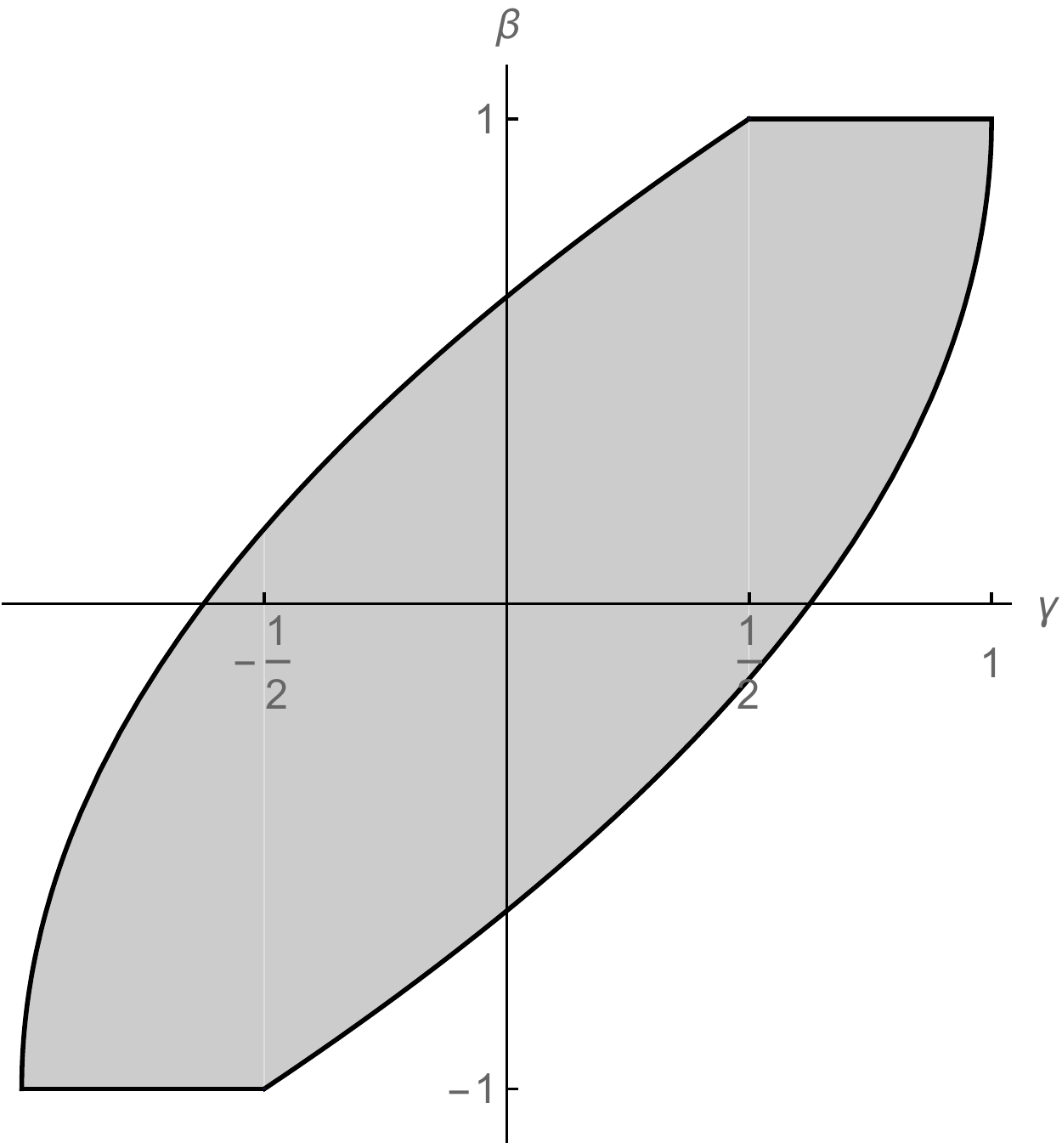}
            \caption{ Blomqvist's beta vs.\ Gini's gamma }\label{fig4}
\end{figure}

\begin{corollary}\label{cor:gamma}
  If $\gamma(C)=\gamma$ for some $C\in\CC$ and $\gamma\in[-1,1]$, then
  \[
    \left.\begin{matrix}
      -1, & \mbox{if } -1\leqslant\gamma\leqslant-\dfrac12 \\
      1-2\sqrt{\frac23(1-\gamma)}, & \mbox{otherwise}
    \end{matrix}\right\}
    \leqslant \beta(C)\leqslant
    \begin{cases}       1, & \mbox{if } \dfrac12\leqslant\gamma\leqslant1\\       -1+2\sqrt{\frac23(1+\gamma)}, & \mbox{otherwise}.     \end{cases}
  \]
\end{corollary}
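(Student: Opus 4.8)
The plan is to invert the two bounds of Theorem \ref{thm:gamma}. First I would establish that the set of attainable pairs $(\beta(C),\gamma(C))$ over all $C\in\CC$ is exactly the planar region lying between the parabola $g=\frac{3(1+t)^2}{8}-1$ (below) and the parabola $g=1-\frac{3(1-t)^2}{8}$ (above), where $t=\beta(C)$ and $g=\gamma(C)$; the Corollary is then simply the description of the vertical slice of this region at a fixed height $g$.

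To obtain the region, fix $t\in[-1,1]$ and work with the family $\BB_t$ from \eqref{eq:beta}. By Lemma \ref{lem1} the bounds $\underline{B}_t$ and $\overline{B}_t$ are genuine copulas (shuffles of $M$), and the computation in the proof of that lemma gives $\underline{B}_t\left(\tfrac12,\tfrac12\right)=\overline{B}_t\left(\tfrac12,\tfrac12\right)=\tfrac{1+t}{4}$, so both lie in $\BB_t$. Since $\underline{B}_t=\inf\BB_t$ and $\overline{B}_t=\sup\BB_t$ pointwise, every $C\in\BB_t$ satisfies $\underline{B}_t\leqslant C\leqslant\overline{B}_t$, and monotonicity of the measure of concordance $\gamma$ yields $\gamma(\underline{B}_t)\leqslant\gamma(C)\leqslant\gamma(\overline{B}_t)$, which by Theorem \ref{thm:gamma} are exactly the two parabola values. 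For the reverse (sharpness) inclusion I would use that $\BB_t$ is convex — it is cut out of the convex set $\CC$ by the single linear constraint $C\left(\tfrac12,\tfrac12\right)=\tfrac{1+t}{4}$ — so the segment $\lambda\underline{B}_t+(1-\lambda)\overline{B}_t$, $\lambda\in[0,1]$, stays inside $\BB_t$; since $\gamma$ is continuous, it attains every intermediate value along this segment. Hence $\{\gamma(C):C\in\BB_t\}=\left[\frac{3(1+t)^2}{8}-1,\,1-\frac{3(1-t)^2}{8}\right]$, and letting $t$ range over $[-1,1]$ delivers the region.

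The final step is to solve, for fixed $g=\gamma\in[-1,1]$, the system $\frac{3(1+t)^2}{8}-1\leqslant g$ and $g\leqslant 1-\frac{3(1-t)^2}{8}$ for $t\in[-1,1]$. The first inequality is $(1+t)^2\leqslant\frac83(1+g)$; since $1+t\geqslant0$ this reads $t\leqslant -1+2\sqrt{\frac23(1+g)}$, and the right-hand side is $\geqslant1$ precisely when $g\geqslant\tfrac12$, which accounts for the split in the upper bound. Symmetrically, the second inequality is $(1-t)^2\leqslant\frac83(1-g)$, i.e. $t\geqslant 1-2\sqrt{\frac23(1-g)}$, whose right-hand side is $\leqslant-1$ precisely when $g\leqslant-\tfrac12$, giving the split in the lower bound. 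Intersecting the two conditions produces exactly the displayed double inequality.

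I do not expect a genuine obstacle: the whole argument parallels the proofs of the analogous corollaries in Sections \ref{sec:rho}, \ref{sec:tau}, and \ref{sec:footrule}. The only points that need care are the sharpness half of the region description — verifying $\underline{B}_t,\overline{B}_t\in\BB_t$ so that the extreme $\gamma$-values are truly attained, and invoking convexity of $\BB_t$ together with continuity of $\gamma$ to fill in the intermediate values — and the routine bookkeeping of square-root branches and of the two thresholds $g=\pm\tfrac12$ at which the $\beta$-bounds saturate at $\pm1$.
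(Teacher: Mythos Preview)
Your proposal is correct and follows exactly the approach the paper intends: the paper states the corollary without proof, treating it as the obvious inversion of Theorem \ref{thm:gamma}, and your inversion is the right one with the correct handling of the thresholds $\gamma=\pm\tfrac12$. Your sharpness argument (that $\underline{B}_t,\overline{B}_t\in\BB_t$, that $\BB_t$ is convex, and that $\gamma$ is continuous along the connecting segment) actually supplies a justification the paper leaves implicit in its claim that Figure 4 depicts \emph{all} attainable pairs.
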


\section{ Conclusion }\label{concl}

Our results can be explained in terms of imprecise copulas. Let us give some more details about this notion for the interested reader. Observe first that the pair $(\underline{C},\overline{C})$ defined by \eqref{eq:inf:sup} does not necessarily consist of two copulas even if $\mathcal{C}_0$ is made of copulas only. In general it is a pair of quasi-copulas that has certain properties and is called an \emph{imprecise copula} in \cite{MoMiPeVi}. Conversely, a question proposed there is whether any imprecise copula $(\underline{C}, \overline{C})$ satisfies \eqref{eq:inf:sup}. The question is answered in the negative in \cite{OmSt1} and equivalent conditions on the set of copulas in order to satisfy \eqref{eq:inf:sup} is given there. Imprecise copulas that do satisfy \eqref{eq:inf:sup} are said to be \emph{coherent} (cf.\ \cite{DoKoBuSkOm,OmSt2,OmSk}). The families of copulas used in our paper and called imprecise copulas may all be seen as coherent. However, unlike the one presented in Nelsen's book \cite[Theorem 2.3]{Nels}, it is not always clear whether they contain  all (!) copulas lying between the two local bounds.


The imprecise copula of \cite[Theorem 1]{BBMNU14}, say, does not necessarily contain all the copulas between the two bounds but only some of them, although it is coherent on the other hand as the authors prove in \cite[Theorem 2]{BBMNU14}. The kind of imprecise copulas in a narrower sense seems to have been introduced in \cite{OmSk}. There they emerge from the study of imprecise shock model copulas (cf. also \cite{DoKoBuSkOm}) such as imprecise Marshall's copulas and imprecise maxmin copulas. In all these cases including the ones studied in our paper the defining condition of a set in question is not fulfilled automatically by all the copulas between the local bounds as opposed to the case studied in \cite{Nels}.

The main point of this paper is built on the fact described in the remark just preceding Proposition \ref{prop1}, i.e., the fact that the imprecise copula $\mathcal{B}_t$ has the same local bounds as the imprecise copula $\mathcal{C}_0$ of Section \ref{sec:max asym}, actually a special case of it. This enables us to relate any measure of concordance, and more generally any monotone function $\kappa\colon\mathcal{C}\to[0,1]$ to Blomqvist's beta. We believe that this method can be expanded further to study relations between any pair of monotone functions on $\mathcal{C}$. However, this would require an even more sophisticated adjustment of our techniques.

As a final remark let us point out that the notion of imprecise copulas is borrowed from the imprecise approach into the copula theory. We know that most of the copula community or even more generally the probability community is reluctant to use imprecise notions since they stand firmly in the standard probability theory. Indeed, the imprecise community may be using finitely additive probability which results in probability distributions of random variables that are monotone functions only and not always cadlag. However, it is a side result of a recent paper \cite{MoMiPeVi} that every bivariate random vector (even if we start in a finitely additive probability space) can be expressed as a copula (i.e., the usual Sklar's copula) composed with possibly non-standard marginal distributions. So, whatever there is non-standard in a bivariate random vector, it moves to the marginal distributions, while copulas remain the same. Does this mean that copulas are a stronger probabilistic concept than the probability itself?

\end{document}